\numberwithin{equation}{section}
\newtheorem{Definition}{Definition}[section]
\newtheorem{theorem}[Definition]{Theorem}
\newtheorem{lemma}[Definition]{Lemma}
\newtheorem{proposition}[Definition]{Proposition}
\newtheorem{corollary}[Definition]{Corollary}
\begin{document}
\title{\Large \bf On the prime spectrum of an le-module}
\author{M. Kumbhakar and A. K. Bhuniya}
\date{}
\maketitle

\begin{center}
Department of Mathematics, Nistarini College, Purulia-723101, W.B.\\
Department of Mathematics, Visva-Bharati, Santiniketan-731235, India. \\
manaskumbhakar@gmail.com, anjankbhuniya@gmail.com
\end{center}

\begin{abstract}{\footnotesize}
Here we continue to characterize a recently introduced notion, le-modules $_{R}M$ over a commutative ring $R$ with unity \cite{Bhuniya}. This article introduces and characterizes Zariski topology on the set $Spec(M)$ of all prime submodule elements of $M$. Thus we extend many results on Zariski topology for modules over a ring to le-modules. The topological space Spec(M) is connected if and only if $R/Ann(M)$ contains no idempotents other than $\overline{0}$ and $\overline{1}$. Open sets in the Zariski topology for the quotient ring $R/Ann(M)$ induces a base of quasi-compact open sets for the Zariski-topology on Spec(M). Every irreducible closed subset of Spec(M) has a generic point. Besides, we prove a number of different equivalent characterizations for Spec(M) to be spectral.
\end{abstract}

\textbf{Keywords:} Rings; complete lattice; le-modules; prime spectrum; Zariski topology.

\textbf{AMS Subject Classifications:} 54B35, 13C05, 13C99, 06F25.

\section{Introduction}
W. Krull \cite{Krull} recognized that many properties on ideals in a commutative ring are independent on the fact that they are composed of elements. Hence those properties can be restated considering ideals to be an undivided entity or element of a suitable algebraic system. In the abstract ideal theory, the ideals are considered to be elements of a multiplicative lattice, a lattice with a commutative multiplication and satisfies some axioms. Ward and Dilworth \cite{Dilworth 1}, \cite{Dilworth 2}, \cite{Ward 1}, \cite{Ward 2}, \cite{Ward and Dilworth 1}, \cite{Ward and Dilworth 2}, \cite{Ward and Dilworth 3}, contributed many significant results in abstract ideal theory via the residuation operation on a multiplicative lattice. In \cite{Dilworth 2} Dilworth redefined principal elements and obtained the Noether primary decomposition theorems. The multiplicative theory of ideals were continued towards dimension theory, representations problem and many other overlaping areas by D. D. Anderson, E. W. Johnson, J. A. Johnson, J. P. Ladiaev, K. P. Bogart, C. Jayaram and many others. For the proper references of these articles we refer to \cite{Anderson and Johnson}. Also we refer to \cite{McCarthy} for more discussions on abstract ideal theory.

Success achieved in abstract ideal theory naturally motivated researchers to consider abstract submodule theory, which today is known as the theory of lattice modules. E. W. Johnson and J. A. Johnson \cite{Johnson1}, \cite{Johnson and Johnson 1}, introduced and studied Noetherian lattice modules. They also considered lattice modules over semilocal Noetherian lattice. Whitman \cite{Whitman 1} introduced principal elements in a lattice module and extended Nagata's principle of idealization to lattice modules. Nakkar and Anderson \cite{Nakkar and Anderson 2} studied localization in lattice modules. There are many articles devoted to lattice modules, to mention a few \cite{Nakkar and Anderson 1}, \cite{Johnson and Johnson 2}, \cite{Johnson2}, \cite{Johnson3}, \cite{Whitman 1}.

This article is a continuation of our present project on le-modules, an algebraic structure motivated by lattice modules over a multiplicative lattice. Our goal is to develop an ``abstract submodule theory'' which will be capable to give insight about rings more directly. The system we choose is a complete lattice $M$ with a commutative and associative addition which is completely join distributive and admits a module like left action of a commutative ring $R$ with $1$. Since we are taking left action of a ring $R$ not of the complete modular lattice of all ideals of $R$, we hope that influence of arithmetic of $R$ on $M$ will be easier to understand. For further details and motivation for introducing le-modules we refer to \cite{Bhuniya}.

In this article we introduce and study Zariski topology on the set Spec$(M)$ of all prime submodule elements of an le-module $_{R}M$. It is well established that the Zariski topology on prime spectrum is a very efficient tool to give geometric interpretation of the arithmetic in rings \cite {Anderson 2}, \cite{Hochster}, \cite{Lu and Yu}, \cite{Schwartz}, \cite{Zhang} and modules \cite{Abuhlail}, \cite{Behboodi}, \cite{Behboodi-Haddadi}, \cite{Duraivel}, \cite{Hassanzadeh3}, \cite{CPLu1}, \cite{CPLu2}, \cite{CPLu3}, \cite{McCasland}. Here we have extended several results on Zariski topology in modules to le-modules.

In addition to this introduction, this article comprises six sections. In Section 2, we recall definition of le-modules and various associated concepts from \cite{Bhuniya}. Also we discuss briefly on the Zariski topology in rings and modules. Section \ref{prime spectrum}, introduces Zariski topology $\tau^{*}(M)$ on the set Spec$(M)$ of all prime submodule elements of an le-module $_RM$, and proves the equivalence of a different characterization of $\tau^{*}(M)$ by the ideals of the ring $R$. Annihilator Ann$(M)$ of an le-module $_RM$ is defined so that it is an ideal of $R$. This induces a natural connection between Spec$(M)$ and Spec$(R/Ann(M))$. Section \ref{continuous-homeomorphic} establishes some basic properties of this natural connection. As a consequence, we deduce that $Spec(M)$ is connected if and only if $\overline{0}$ and $\overline{1}$ are the only idempotents of the ring $R/Ann(M)$. Section \ref{Base} finds a base for the Zariski topology on Spec$(M)$ consisting of quasi-compact open sets. Section \ref{irreducible-generic} characterizes irreducible closed subsets and generic points in Spec$(M)$ showing that these two notions are closely related. Section \ref{section spectral} proves the equivalence of a number of different characterizations for Spec(M) to be spectral.

\section{Preliminaries}    \label{preliminaries}
Throughout the article, R stands for a commutative ring with $1$. The cardinality of a set $X$ will be denoted by $|X|$.

First we recall the definition of an le-module and various associated concepts from \cite{Bhuniya}. Here by an \emph{le-semigroup} we mean $(M, +, \leqslant, e)$ such that $(M, \leq)$ is a complete lattice, $(M, +)$ is a commutative monoid with the zero element $0_M$ and for all $m, m_i \in M, i \in I$ it satisfies $m \leq e$ and
\begin{enumerate}
\item[(S)] $m+(\vee_{i \in I}m_{i}) = \vee_{i \in I}(m+m_{i})$.
\end{enumerate}

Let $R$ be a ring and $(M,+,\leqslant, e)$ be an le-semigroup. If there is a mapping $R \times M \longrightarrow M$ which satisfies
\begin{enumerate}
\item[(M1)]  $r(m_{1}+m_{2})= rm_{1}+rm_{2}$;
\item[(M2)] $(r_{1}+r_{2})m \leqslant r_{1}m+r_{2}m$;
\item[(M3)] $(r_{1}r_{2})m=r_{1}(r_{2}m)$;
\item[(M4)] $1_{R}m=m; \;\;\; 0_{R}m=r0_{M}=0_{M}$;
\item[(M5)] $r(\vee_{i \in I}m_{i}) = \vee_{i \in I}rm_{i}$,
\end{enumerate}
for all $r, r_{1}, r_{2} \in R$ and $m, m_{1}, m_{2}, m_{i} \in M, i \in I$ then $M$ is called an \emph{le-module} over $R$. It is denoted by $_RM$ or by $M$ if it is not necessary to mention the ring $R$.

From (M5), we have,
\begin{enumerate}
\item[(M5)$^{\prime}$] $m_{1} \leqslant m_{2} \Rightarrow rm_{1} \leqslant rm_{2}$, for all $r \in R$ and $m_{1},m_{2} \in$ $M$.
\end{enumerate}

An element $n$ of an le-module $_{R}M$ is said to be a \emph{submodule element} if $ n+n, rn \leqslant n$, for all $r \in R$. We call a submodule element $n$ proper if $n \neq e$. Note that $ 0_{M}= 0_{R}.n \leqslant n$, for every submodule element $n$ of $M$. Also $ n+n = n $, i.e. every submodule element of $M$  is an idempotent. We define the sum of the family $\{n_{i}\}_{i \in I}$ of submodule elements in $_RM$ by:
\begin{center}
$\sum_{i \in I}n_{i} = \vee \{ (n_{i_{1}}+n_{i_{2}}+ \cdots +n_{i_{k}}) : k \in \mathbb{N}, and \;\; i_{1}, i_{2}, \cdots, i_{k} \in I \}$.
\end{center}
Since $_{R}M$ is assumed to be complete, $\sum_{i \in I}n_{i}$ is well defined. It is easy to check that $\sum_{i \in I}n_{i}$ is a submodule element of $M$.

If $n$ is a submodule element in $_RM$, then we denote
\begin{center}
$(n:e)= \{r \in R: re \leqslant n \}$.
\end{center}
Then $0_{R} \in (n:e)$ implies that $(n:e) \neq \emptyset$. One can check that $(n:e)$ is an ideal of $R$. For submodule elements $n \leqslant l$ of an le-module $_{R}M$, we have $(n : e) \subseteq (l : e)$. Also if $\{n_{i}\}_{i \in I}$ be an arbitrary family of submodule elements in $_{R}M$, then $ (\wedge_{i \in I} n_{i} : e) = \cap_{i \in I} (n_{i} : e)$.

We call $(0_{M}:e)$ the annihilator of $_RM$. It is denoted by $Ann(M)$. Thus
\begin{align*}
Ann(M) &= \{r \in R : re \leqslant 0_{M}\}\\ &= \{r \in R : re = 0_{M}\}.
\end{align*}

For an ideal $I$ of $R$, we define
\begin{center}
$ Ie = \vee \{ \sum_{i = 1}^{k} a_{i}e : k \in \mathbb{N}; a_{1},a_{2},\cdots,a_{k} \in I \}$
\end{center}
Then $Ie$ is a submodule element of $M$. Also for any two ideals $I$ and $J$ of $R$, $I \subseteq J$ implies that $Ie \leqslant Je$. The following result, proved in \cite{Bhuniya}, is useful here.
\begin{lemma}
Let $_{R}M$ be an le-module and $n$ be a submodule element of $M$. Then for any ideal $I$ of $R$, $Ie \leqslant n$ if and only if $I \subseteq (n:e)$.
\end{lemma}

Now we recall some notions from rings and modules. An ideal $P$ in a ring $R$ is called \emph{prime} if for every $a, b \in R$, $ab \in P$ implies that $a \in P$ or $b \in P$. We denote the set of all prime ideals of a ring $R$ by $X^{R}$ or Spec$(R)$. For every ideal $I$ of $R$, we define
\begin{align*}
&V^{R}(I) = \{P \in Spec(R): I \subseteq P\}, \; \; and \\
&\tau(R) = \{X^{R}\setminus V^{R}(I) : I\; is\; an\; ideal\; of\; R\}.
\end{align*}
Then $\tau(R)$ is a topology on Spec$(R)$, which is known as the \emph{Zariski topology} on Spec$(R)$. There are many enlightening characterizations associating arithmetical properties of $R$ and topological properties of Spec$(R)$ \cite{Schwartz}.

Let $M$ be a left $R$-module. Then a proper submodule $P$ of $M$ is called a \emph{prime submodule} if for every for $r \in R$ and $n \in M$, $rn \in P$  implies that either $n \in P$ or $rM \subseteq P$. We denote the set of all prime submodules of $M$ by Spec$(M)$.

For a submodule $N$ of $M$, $(N:M) = \{r \in R: rM \subseteq N\}$ is an ideal of $R$. There is a topology $\tau(M)$ on Spec$(M)$ such that the closed subsets are of the form
\begin{center}
$V(N) = \{P \in Spec(M): (N:M) \subseteq (P:M)\}$.
\end{center}
The topology $\tau(M)$ is called the Zariski topology on $M$. Associating arithmetic of a module over a ring $R$ with the geometry of the Zariski topology on $M$ is an active area of research on modules \cite{Abuhlail}, \cite{Behboodi-Haddadi}, \cite{Duraivel}, \cite{Hassanzadeh3}, \cite{CPLu1}, \cite{CPLu2}, \cite{CPLu3}, \cite{McCasland}.

The notion of prime submodule elements was introduced in \cite{Bhuniya}, which extends prime submodules of a module over a ring. A proper submodule element $p$ of an le-module $_{R}M$ is said to be a \emph{prime submodule element} if for every $r \in R$ and $n \in M$, $rn \leqslant p$ implies that $r \in (p:e) $ or $n \leqslant p$. The \emph{prime spectrum} of $M$ is the set of all prime submodule elements of $M$ and it is denoted by Spec$(M)$ or $X^{M}$. For $P \in Spec(R)$, we denote
\begin{center}
$Spec_{P}(M) = \{p \in Spec(M): (p:e) = P\}$.
\end{center}

We also have the following relation between prime submodule elements of an le-module $_RM$ and prime ideals of $R$.
\begin{lemma}\cite{Bhuniya}     \label{prime ideal}
If $p$ is a prime submodule element of $_{R}M$, then $(p:e)$ is a prime ideal of $R$.
\end{lemma}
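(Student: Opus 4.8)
The plan is to verify the two defining properties of a prime ideal directly from the definitions, with no machinery beyond what has already been set up. First I would check that $(p:e)$ is a \emph{proper} ideal of $R$. Since $(p:e)$ is already known to be an ideal of $R$ (as recalled in the preliminaries), it only remains to rule out $(p:e)=R$. If $1_R\in(p:e)$ then $e=1_Re\leqslant p$, which combined with the standing inequality $p\leqslant e$ forces $p=e$, contradicting the assumption that $p$ is a proper submodule element. Hence $(p:e)\neq R$.

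Next I would establish the primeness condition. Suppose $a,b\in R$ with $ab\in(p:e)$; the goal is to show $a\in(p:e)$ or $b\in(p:e)$. By definition of $(p:e)$, the hypothesis says $(ab)e\leqslant p$, and by axiom (M3) we may rewrite $(ab)e=a(be)$. Now $be$ is an element of $M$, so the defining property of the prime submodule element $p$ applies to the inequality $a(be)\leqslant p$ with $r=a$ and $n=be$: it yields either $a\in(p:e)$ or $be\leqslant p$. In the first case we are finished immediately; in the second case, $be\leqslant p$ is precisely the statement that $b\in(p:e)$, so we are finished there as well.

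I do not anticipate any real obstacle in this argument; it is essentially a one-line unwinding of the definitions once (M3) is used to pass from $(ab)e$ to $a(be)$. The only point worth flagging is that $be$ need not itself be a submodule element of $M$, but this causes no difficulty because the definition of a prime submodule element quantifies over \emph{all} $n\in M$, not only over submodule elements. The sole ingredients are (M3), the definition of $(p:e)$, the definition of a prime submodule element, and the properness $p\neq e$.
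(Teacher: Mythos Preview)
Your proof is correct. The paper itself does not supply a proof of this lemma; it simply cites \cite{Bhuniya} and states the result, so there is nothing to compare against here beyond noting that your argument is the standard direct verification one would expect, and it uses exactly the ingredients available (axiom (M3), the definition of $(p:e)$, properness of $p$, and the definition of a prime submodule element).
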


Also we refer to \cite{Atiyah}, \cite{Bourbaki} for background on commutative ring theory, to \cite{Bourbaki top} for fundamentals on topology and to \cite{McCarthy} for details on multiplicative theory of ideals.

\section{The Zariski topology on an le-module}     \label{prime spectrum}
In this section we give the definition and an alternative characterization of Zariski topology on the prime spectrum Spec(M) of an le-module $M$. For any submodule element $n$ of $M$, we consider two different types of varieties $V(n)$ and $V^{*}(n)$ defined by
\begin{align*}
&V(n)=\{ p \in Spec(M): n \leqslant p \}; \; \; and \\
&V^{*}(n)=\{ p \in Spec(M): (n:e) \subseteq (p:e) \}.
\end{align*}
Then $V(n) \subseteq V^{*}(n)$ for every submodule element $n$ of $M$.

\begin{proposition}      \label{Zariski topology}
Let $_{R}M$ be an le-module. Then
\begin{enumerate}
\item[(i)]
$V^{*}(0_{M})= X^{M} = V(0_{M})$;
\item[(ii)]
$V^{*}(e)=\emptyset = V(e)$;
\item[(iii)]
For an arbitrary family of submodule elements $\{n_{i}\}_{i \in I}$ of $M$, \\
(a) $\cap_{i \in I} V^{*}(n_{i})= V^{*}(\sum_{i \in I}(n_{i}:e)e)$;\\
(b) $\cap_{i \in I} V(n_{i})= V(\sum_{i \in I}n_{i})$;
\item[(iv)]
For any two submodule elements $n$ and $l$ of $M$,\\
(a) $V^{*}(n)\cup V^{*}(l) = V^{*}(n \wedge l)$;\\
(b) $V(n)\cup V(l) \subseteq V(n \wedge l)$.
\end{enumerate}
\end{proposition}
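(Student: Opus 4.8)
The plan is to verify each clause directly from the definitions of $V$ and $V^{*}$, using three facts recorded in Section~\ref{preliminaries}: that $0_{M}\leqslant n$ for every submodule element $n$; that $n\leqslant l$ implies $(n:e)\subseteq(l:e)$ while $(\wedge_{i}n_{i}:e)=\cap_{i}(n_{i}:e)$; and the Lemma asserting $Ie\leqslant n$ iff $I\subseteq(n:e)$ for an ideal $I$. Note first that $e$ is itself a submodule element, since $re\leqslant e$ and $e+e\leqslant e$ hold because $e$ is the top of the lattice.

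For (i): every $p\in X^{M}$ satisfies $0_{M}\leqslant p$, so $p\in V(0_{M})$; and $(0_{M}:e)=Ann(M)\subseteq(p:e)$ because $re=0_{M}\leqslant p$ whenever $r\in Ann(M)$, so $p\in V^{*}(0_{M})$; hence both sets equal $X^{M}$. For (ii): $p\in V(e)$ would force $e\leqslant p$, and since $p\leqslant e$ always this gives $p=e$, contradicting the properness of a prime submodule element; thus $V(e)=\emptyset$. Since $re\leqslant e$ for all $r$, we have $(e:e)=R$, so $p\in V^{*}(e)$ would force $(p:e)=R$, i.e. $e=1_{R}e\leqslant p$, again impossible; thus $V^{*}(e)=\emptyset$.

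For (iii)(b): if $n_{i}\leqslant p$ for all $i$, then using (S) to obtain monotonicity of $+$ together with $p+p=p$, every finite sum $n_{i_{1}}+\cdots+n_{i_{k}}\leqslant p$, so their join $\sum_{i}n_{i}\leqslant p$; conversely $n_{i}\leqslant\sum_{i}n_{i}$. Hence $\cap_{i}V(n_{i})=V(\sum_{i}n_{i})$. For (iii)(a), set $J=\sum_{i\in I}(n_{i}:e)$, an ideal of $R$, and observe $Je=\sum_{i\in I}(n_{i}:e)e$. The key auxiliary claim is that for any ideal $J$ and any submodule element $p$ one has $(Je:e)\subseteq(p:e)$ iff $J\subseteq(p:e)$: the Lemma gives $J\subseteq(Je:e)$, which yields the forward direction immediately, and conversely $J\subseteq(p:e)$ gives $Je\leqslant p$ by the Lemma, whence $(Je:e)\subseteq(p:e)$ by the order-reversing property of $(\,\cdot\,:e)$. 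Applying this with our $J$, membership $p\in V^{*}(Je)$ is equivalent to $\sum_{i}(n_{i}:e)\subseteq(p:e)$, i.e. to $(n_{i}:e)\subseteq(p:e)$ for every $i$, i.e. to $p\in\cap_{i}V^{*}(n_{i})$.

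For (iv): in both (a) and (b) the inclusion of the union in $V(n\wedge l)$, respectively $V^{*}(n\wedge l)$, is immediate since $n\wedge l\leqslant n$ and $n\wedge l\leqslant l$ and the varieties are order-reversing. The one substantive point is the reverse inclusion in (a). Suppose $p\in V^{*}(n\wedge l)$; then $(n:e)\cap(l:e)=(n\wedge l:e)\subseteq(p:e)$, and since $(n:e)(l:e)\subseteq(n:e)\cap(l:e)\subseteq(p:e)$ while $(p:e)$ is a prime ideal by Lemma~\ref{prime ideal}, primeness forces $(n:e)\subseteq(p:e)$ or $(l:e)\subseteq(p:e)$, i.e. $p\in V^{*}(n)\cup V^{*}(l)$. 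I expect this appeal to primeness of $(p:e)$, together with the ideal-theoretic reduction in (iii)(a), to be the only places needing more than unwinding definitions; the rest is bookkeeping with the lattice order and the residual $(\,\cdot\,:e)$.
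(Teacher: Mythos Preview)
Your proof is correct and follows essentially the same route as the paper, which also treats (i) and (ii) as obvious and proves (iv)(a) by the identical appeal to $(n\wedge l:e)=(n:e)\cap(l:e)$ together with primeness of $(p:e)$. The only minor difference is in (iii)(a): the paper argues directly with the submodule element $\sum_i(n_i:e)e$, showing $(n_i:e)e\leqslant p$ from $(n_i:e)\subseteq(p:e)$ and then passing to the sum, whereas you first package everything into the ideal $J=\sum_i(n_i:e)$ and invoke the Lemma on $Je$; this is a cosmetic repackaging, though note that your assertion $Je=\sum_i(n_i:e)e$ is not proved in the paper and tacitly uses (M2) to check $(\sum_i I_i)e=\sum_i(I_ie)$.
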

\begin{proof}
$(i)$ and $(ii)$ are obvious. Also the proofs of $(iii)(b)$ and $(iv)(b)$ are similar to $(iii)(a)$ and $(iv)(a)$ respectively. Hence we prove here only $(iii)(a)$ and $(iv)(a)$.\\
$(iii)(a)$ Let $p \in \cap_{i \in I} V^{*}(n_{i})$. Then $(n_{i}:e) \subseteq (p:e)$ implies that $(n_{i}:e)e \leqslant (p:e)e \leqslant p$, for all $i \in I$. Consequently $\sum_{i \in I}(n_{i}:e)e \leqslant p$ and so $(\sum_{i \in I}(n_{i}:e)e :e) \subseteq (p:e)$. Hence $p \in V^{*}(\sum_{i \in I}(n_{i}:e)e)$ and it follows that $\cap_{i \in I} V^{*}(n_{i}) \subseteq V^{*}(\sum_{i \in I}(n_{i}:e)e)$. Next let $p \in V^{*}(\sum_{i \in I}(n_{i}:e)e)$. Then for any $j \in I$, $(n_{j}:e) \subseteq ((n_{j}:e)e :e) \subseteq (\sum_{i \in I}(n_{i}:e)e :e) \subseteq (p:e)$ implies that $p \in V^{*}(n_{j})$ and so $V^{*}(\sum_{i \in I}(n_{i}:e)e) \subseteq \cap_{i \in I} V^{*}(n_{i})$. Thus $\cap_{i \in I} V^{*}(n_{i})= V^{*}(\sum_{i \in I}(n_{i}:e)e)$.\\
$(iv)(a)$ Now $n \wedge l \leqslant n$ and $n \wedge l \leqslant l$ implies that $V^{*}(n) \cup V^{*}(l) \subseteq V^{*}(n \wedge l)$. Let $p \in V^{*}(n \wedge l)$. Then $(n \wedge l :e) \subseteq (p:e)$ implies that $(n:e) \cap (l:e) \subseteq (p:e)$. Since $(p:e)$ is a prime ideal, either $(n:e) \subseteq (p:e)$ or $(l:e) \subseteq (p:e)$. Hence $p \in V^{*}(n) \cup V^{*}(l)$ and it follows that $V^{*}(n \wedge l) \subseteq V^{*}(n) \cup V^{*}(l)$. Therefore $V^{*}(n) \cup V^{*}(l) = V^{*}(n \wedge l)$.
\end{proof}

Thus we see that the collection $\{V(n) \mid n \; \textrm{is a submodule element of} \; M \}$ is not closed under finite unions and hence fails to be the set of all closed subsets of some topology on $X^M$. For any ideal $I$, $Ie$ is a submodule element of $M$. Now we see that the subcollection $\{V(Ie) \mid I \; \textrm{is an ideal of} \; R \}$ is closed under finite unions.
\begin{lemma}     \label{union intersection}
Let $_{R}M$ be an le-module. Then for any ideals $I$ and $J$ in $R$,\\
(i) $V(Ie) \cup V(Je) = V((I \cap J)e) = V((IJ)e)$;\\
(ii) $V^{*}(Ie) \cup V^{*}(Je) = V^{*}((I \cap J)e) = V^{*}((IJ)e)$. In particular, $V^{*}(re) \cup V^{*}(se) = V^{*}((rs)e)$ for any $r,s \in R$.
\end{lemma}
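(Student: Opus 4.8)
The plan is to reduce both assertions to a single, purely ring-theoretic fact about prime ideals of $R$. The key observation I would establish first is that for \emph{any} ideal $I$ of $R$ one has $V(Ie) = V^{*}(Ie) = \{p \in X^{M} : I \subseteq (p:e)\}$. Indeed, Lemma~2.1 (the lemma relating $Ie$ and $(n:e)$) says that $Ie \leqslant p$ is equivalent to $I \subseteq (p:e)$; this simultaneously identifies $V(Ie)$ with the displayed set and gives $V(Ie) \subseteq V^{*}(Ie)$ (as already noted in the text). For the reverse inclusion $V^{*}(Ie) \subseteq V(Ie)$ I would use $I \subseteq (Ie:e)$ --- which is Lemma~2.1 applied to $Ie \leqslant Ie$ --- so that $(Ie:e) \subseteq (p:e)$ forces $I \subseteq (p:e)$ and hence $Ie \leqslant p$. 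Thus $V^{*}(Ie) = V(Ie)$ for every ideal $I$.

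Granting this, both (i) and (ii) reduce to showing that for each $p \in X^{M}$ the three conditions ``$I \subseteq (p:e)$ or $J \subseteq (p:e)$'', ``$I \cap J \subseteq (p:e)$'', and ``$IJ \subseteq (p:e)$'' are equivalent. The implication from the first to the second, and from the second to the third, are immediate from $I \cap J \subseteq I, J$ and $IJ \subseteq I \cap J$. For the remaining implication, suppose $IJ \subseteq (p:e)$; since $(p:e)$ is a prime ideal of $R$ by Lemma~\ref{prime ideal}, the standard fact that a prime ideal containing a product $IJ$ of two ideals must contain $I$ or $J$ yields $I \subseteq (p:e)$ or $J \subseteq (p:e)$. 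This closes the cycle of implications and, via the reduction of the first paragraph, proves (i) and (ii).

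For the last assertion I would run the same argument with the principal ideals $Rr$ and $Rs$. Exactly as above one checks $V^{*}(re) = \{p \in X^{M} : r \in (p:e)\}$ --- here $r \in (re:e)$ because $re \leqslant re$, and conversely $r \in (p:e)$ gives $re \leqslant p$ --- and similarly for $se$ and for $(rs)e$. Then $V^{*}(re) \cup V^{*}(se)$ is cut out by ``$r \in (p:e)$ or $s \in (p:e)$'' while $V^{*}((rs)e)$ is cut out by ``$rs \in (p:e)$'', and these coincide precisely because $(p:e)$ is prime (this is the one-element version of the ideal statement above, applied to $Rr$, $Rs$, whose product is $Rrs$).

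I do not anticipate a genuine obstacle here: the only real content is the translation, through the $Ie$--$(p:e)$ correspondence of Lemma~2.1, between inequalities of submodule elements and inclusions of ideals, after which the claim is routine prime-ideal bookkeeping resting on Lemma~\ref{prime ideal}. The one step worth stating carefully is the reduction in the first paragraph --- in particular the identity $V(Ie) = V^{*}(Ie)$ for ideals $I$ --- since it is exactly what makes parts (i) and (ii) the same computation.
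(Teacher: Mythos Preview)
Your proposal is correct and follows essentially the same approach as the paper: translate the conditions $Ie\leqslant p$, $(I\cap J)e\leqslant p$, $(IJ)e\leqslant p$ into ideal inclusions in $(p:e)$ via Lemma~2.1 and then invoke the primality of $(p:e)$ (Lemma~\ref{prime ideal}). The only organizational difference is that you first establish $V(Ie)=V^{*}(Ie)$ to unify parts (i) and (ii), whereas the paper proves (i) directly, declares (ii) ``similar'', and postpones the identity $V(Ie)=V^{*}(Ie)$ to Proposition~\ref{V^{*}(n) = V^{*}((n:e)e)}(iii).
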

\begin{proof}
(i) First $I \cap J \subseteq I$ implies that $(I \cap J)e \leqslant Ie$ and so $V(Ie) \subseteq V((I \cap J)e)$. Similarly $V(Je) \subseteq V((I \cap J)e)$, and we have $V(Ie) \cup V(Je) \subseteq V((I \cap J)e)$. Also $IJ \subseteq I \cap J$ implies that $V((I \cap J)e) \subseteq V((IJ)e)$. Now let $p \in V((IJ)e)$. Then $(IJ)e \leqslant p$ implies that $IJ \subseteq (p:e)$. Since $(p:e)$ is a prime ideal, either $I \subseteq (p:e)$ or $J \subseteq (p:e)$. Then either $Ie \leqslant (p:e)e \leqslant p$ or $Je \leqslant (p:e)e \leqslant p$. Hence $p \in V(Ie) \cup V(Je)$ and it follows that $V((IJ)e) \subseteq V(Ie) \cup V(Je)$. Thus $V(Ie) \cup V(Je) \subseteq V((I \cap J)e) \subseteq V((IJ)e) \subseteq V(Ie) \cup V(Je)$. This completes the proof. \\
(ii) Similar.
\end{proof}

We denote,
\begin{align*}
&\mathcal{V}(M) = \{V(n): n \; is\; a\; submodule\; element\; of\; M \}, \\ &\mathcal{V^{*}}(M) = \{V^{*}(n): n\; is\; a\; submodule\; element\; of\; M \}, \\ &\mathcal{V'}(M) = \{V(Ie): I\; is\; an\; ideal\; of\; R \}.
\end{align*}
From Proposition \ref{Zariski topology}, it follows that there exists a topology, $\tau(M)$ say, on Spec$(M)$ having $\mathcal{V}(M)$ as the collection of all closed sets if and only if $\mathcal{V}(M)$ is closed under finite unions. In this case, we call the topology $\tau(M)$ the \emph{quasi-Zariski topology} on Spec$(M)$. Also from Proposition \ref{Zariski topology}, it is evident that for any le-module $_{R}M$ there always exists a topology, $\tau^{*}(M)$ say, on Spec$(M)$ having $\mathcal{V^{*}}(M)$ as the family of all closed sets. This topology $\tau^{*}(M)$ is called the \emph{Zariski topology} on Spec$(M)$. In this article we focus on the basic properties of the Zariski topology $\tau^{*}(M)$. By Lemma \ref{union intersection}, it follows that $\mathcal{V'}(M)$ induces a topology, $\tau'(M)$ say, on Spec$(M)$ for every le-module $_{R}M$.

Now we study the interrelations among these three topologies $\tau(M)$, $\tau^{*}(M)$ and $\tau'(M)$.
\begin{proposition}
Let $_{R}M$ be an le-module and $n$, $l$ are submodule elements of $M$. If $(n:e) = (l:e)$ then $V^{*}(n) = V^{*}(l)$. The converse is also true if both $n$ and $l$ are prime.
\end{proposition}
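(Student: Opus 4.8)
The statement splits into two parts. The plan for the first implication is immediate from the definitions: $V^{*}(n)$ depends on the submodule element $n$ only through the ideal $(n:e)$, since $p \in V^{*}(n)$ means precisely $(n:e) \subseteq (p:e)$. Hence if $(n:e) = (l:e)$ then the two defining conditions $(n:e) \subseteq (p:e)$ and $(l:e) \subseteq (p:e)$ coincide, giving $V^{*}(n) = V^{*}(l)$ with no further work.

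For the converse, assume $V^{*}(n) = V^{*}(l)$ with both $n$ and $l$ prime. The key observation is that a prime submodule element lies in its own $V^{*}$-variety: since $(n:e) \subseteq (n:e)$, we have $n \in V^{*}(n)$, and by Lemma \ref{prime ideal} the ideal $(n:e)$ is prime, so $n \in Spec_{(n:e)}(M)$. Now from $n \in V^{*}(n) = V^{*}(l)$ we get $(l:e) \subseteq (n:e)$, and symmetrically from $l \in V^{*}(l) = V^{*}(n)$ we get $(n:e) \subseteq (l:e)$. Combining the two inclusions yields $(n:e) = (l:e)$.

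The only subtlety worth flagging is that both halves of the converse argument genuinely need primeness: it is used solely to guarantee that $n$ (respectively $l$) actually belongs to $Spec(M)$, so that it is a legitimate point of the variety $V^{*}(n)$; a non-prime submodule element need not sit inside any $V^{*}$-variety in a way that pins down its $(\cdot:e)$. Beyond that, there is no real obstacle — the proof is a short two-inclusion argument resting on Lemma \ref{prime ideal} and the elementary fact that $V^{*}$ factors through the operator $n \mapsto (n:e)$.
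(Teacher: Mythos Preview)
Your proof is correct and rests on the same key observation as the paper's: a prime submodule element lies in its own $V^{*}$-variety, so membership in the opposite variety forces the desired inclusion of ideals. Your converse argument is in fact a bit more direct than the paper's, which routes through an auxiliary variety $V^{*}(re)$ for $r \in (n:e)$ before invoking $l \in V^{*}(l)$; you skip that detour by applying the self-membership of $n$ and $l$ immediately, but the underlying idea is identical.
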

\begin{proof}
Let $p \in V^{*}(n)$. Then $(n:e) \subseteq (p:e)$, i.e, $(l:e) \subseteq (p:e)$ and hence $p \in V^{*}(l)$. Thus $V^{*}(n) \subseteq V^{*}(l)$. Similarly $V^{*}(l) \subseteq V^{*}(n)$. Therefore $V^{*}(n) = V^{*}(l)$. Conversely suppose that $V^{*}(n) = V^{*}(l)$ and both $n$ and $l$ are prime. Let $r \in (n:e)$. Then $re \leqslant n$ implies that $V^{*}(n) \subseteq V^{*}(re)$, i.e, $V^{*}(l) \subseteq V^{*}(re)$. Since $l$ is a prime submodule element, $l \in V^{*}(l)$, and so $l \in V^{*}(re)$. Thus $r \in (re:e) \subseteq (l:e)$. Therefore $(n:e) \subseteq (l:e)$. Similarly $(l:e) \subseteq (n:e)$, and hence $(n:e) = (l:e)$.
\end{proof}
\begin{proposition}       \label{V^{*}(n) = V^{*}((n:e)e)}
Let $_{R}M$ be an le-module, $n$ be a submodule element of $M$ and $I$ be an ideal of $R$. Then \\
(i) $V^{*}(n) = \cup_{P \in V^{R}((n:e))} Spec_{P}(M)$;\\
(ii) $V^{*}(n) = V^{*}((n:e)e) = V((n:e)e)$;\\
(iii) $V(Ie) = V^{*}(Ie)$. In particular $V(re) = V^{*}(re)$ for every $r \in R$.
\end{proposition}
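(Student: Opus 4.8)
The plan is to treat the three parts in the order $(i)$, $(iii)$, $(ii)$, using throughout the adjunction $Ie \leqslant n \Leftrightarrow I \subseteq (n:e)$ supplied by the first lemma of Section~\ref{preliminaries}, together with Lemma~\ref{prime ideal}. One auxiliary observation does most of the work: for every ideal $I$ of $R$ one has $I \subseteq (Ie:e)$, since $r \in I$ forces $re \leqslant Ie$ by the very definition of $Ie$; dually, $(n:e)e \leqslant n$ by the first lemma of Section~\ref{preliminaries} applied with the ideal $(n:e)$, so $((n:e)e:e) \subseteq (n:e)$. Taking $I = (n:e)$ in the first inclusion and combining, we obtain the identity $((n:e)e:e) = (n:e)$, which is exactly what links $n$ with the ``ideal-induced'' submodule element $(n:e)e$.

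For $(i)$ I would argue straight from the definitions. If $p \in Spec(M)$, then $(p:e)$ is a prime ideal of $R$ by Lemma~\ref{prime ideal}; hence $p \in V^{*}(n)$ iff $(p:e)$ is a prime ideal of $R$ containing $(n:e)$, i.e. iff $(p:e) \in V^{R}((n:e))$, i.e. iff $p \in Spec_{P}(M)$ for $P := (p:e) \in V^{R}((n:e))$. Conversely, every $p \in Spec_{P}(M)$ with $P \in V^{R}((n:e))$ satisfies $(n:e) \subseteq P = (p:e)$, so $p \in V^{*}(n)$. This gives $V^{*}(n) = \cup_{P \in V^{R}((n:e))} Spec_{P}(M)$.

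For $(iii)$, one inclusion $V(Ie) \subseteq V^{*}(Ie)$ is the general fact recorded just after the definitions of $V$ and $V^{*}$. For the reverse, let $p \in V^{*}(Ie)$, so $(Ie:e) \subseteq (p:e)$; combining with $I \subseteq (Ie:e)$ gives $I \subseteq (p:e)$, whence $Ie \leqslant p$ by the first lemma of Section~\ref{preliminaries}, i.e. $p \in V(Ie)$. Thus $V(Ie) = V^{*}(Ie)$, and the stated particular case is the instance $I = Rr$, once one observes that $(Rr)e = re$: the inequality $\leqslant$ holds because $re$ is itself a submodule element (indeed $re + re = r(e+e) \leqslant re$ and $s(re) = r(se) \leqslant re$ for all $s \in R$, using that $e$ is the greatest element, so $e+e \leqslant e$ and $se \leqslant e$), and $\geqslant$ holds because $r \in Rr$.

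Finally, $(ii)$ is assembled from these pieces: the identity $((n:e)e:e) = (n:e)$ together with the preceding Proposition (equal residuals in $e$ give equal $V^{*}$-sets) yields $V^{*}(n) = V^{*}((n:e)e)$, while part $(iii)$ applied to the ideal $I = (n:e)$ gives $V^{*}((n:e)e) = V((n:e)e)$. I do not expect a genuine obstacle here; the only points needing care are keeping the direction of the adjunction straight when passing between $Ie \leqslant n$ and $I \subseteq (n:e)$, and noting that only the easy implication of the preceding Proposition is invoked, so its primeness hypothesis plays no role.
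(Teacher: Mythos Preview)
Your argument is correct. The ingredients are the same as the paper's---the adjunction $Ie \leqslant n \Leftrightarrow I \subseteq (n:e)$ and the inclusion $(n:e) \subseteq ((n:e)e:e)$---but you organize them differently: you treat $(iii)$ before $(ii)$, isolate the clean identity $((n:e)e:e) = (n:e)$ as a lemma, and then obtain both equalities in $(ii)$ by combining this identity with the easy direction of the preceding Proposition and with your already-proved $(iii)$ applied to $I = (n:e)$. The paper instead proves the two equalities in $(ii)$ by separate direct double-inclusion arguments (using only the inclusion $(n:e) \subseteq ((n:e)e:e)$, not the full identity) and then dismisses $(iii)$ as routine. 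Your ordering avoids a little duplication and makes the dependence of $V^{*}(n)$ on $(n:e)$ alone more transparent; the paper's ordering keeps each part self-contained. Both are equally valid, and your extra verification that $(Rr)e = re$ for the ``in particular'' clause is a nice touch the paper leaves implicit.
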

\begin{proof}
(i) Let $p \in V^{*}(n)$. Then $(n:e) \subseteq (p:e)$ and so $p \in \cup_{P \in V^{R}((n:e))} Spec_{P}(M)$, since $(p:e)$ itself a prime ideal. Thus $V^{*}(n) \subseteq \cup_{P \in V^{R}((n:e))} Spec_{P}(M)$. Also let $p \in \cup_{P \in V^{R}((n:e))} Spec_{P}(M)$. Then there exists a prime ideal $P_{0} \in V^{R}((n:e))$ such that $p \in Spec_{P_{0}}(M)$. This implies that $(n:e) \subseteq P_{0} = (p:e)$, i.e, $p \in V^{*}(n)$. Hence $\cup_{P \in V^{R}((n:e))} Spec_{P}(M) \subseteq V^{*}(n)$. Therefore $V^{*}(n) = \cup_{P \in V^{R}((n:e))} Spec_{P}(M)$.\\
(ii) Since $(n:e)e \leqslant n$, $V^{*}(n) \subseteq V^{*}((n:e)e)$. Let $p \in V^{*}((n:e)e)$. Then $((n:e)e:e) \subseteq (p:e)$. Now $(n:e) \subseteq ((n:e)e:e)$ implies that $(n:e) \subseteq (p:e)$ and so $p \in V^{*}(n)$. Thus $V^{*}((n:e)e) \subseteq V^{*}(n)$ and hence $V^{*}(n) = V^{*}((n:e)e)$. Let $p \in V^{*}(n)$. Then $(n:e) \subseteq (p:e)$ which implies that $(n:e)e \leqslant (p:e)e \leqslant p$, i.e, $p \in V((n:e)e)$. Thus $V^{*}(n) \subseteq V((n:e)e)$. Also let $p \in V((n:e)e)$. Then $(n:e)e \leqslant p$ implies $((n:e)e:e) \subseteq (p:e)$. Thus $p \in V^{*}((n:e)e) = V^{*}(n)$ and hence $V((n:e)e) \subseteq V^{*}(n)$. Therefore $V^{*}(n) = V((n:e)e) = V^{*}((n:e)e)$.\\
(iii) The proof is omitted since it is easy to prove.
\end{proof}

\begin{theorem}      \label{identical}
For any le-module $_{R}M$, the Zariski topology $\tau^{*}(M)$ on Spec$(M)$ is identical with $\tau'(M)$.
\end{theorem}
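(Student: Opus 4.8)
The plan is to show that the two families of closed sets, $\mathcal{V^{*}}(M)$ and $\mathcal{V'}(M)$, coincide; since a topology is determined by its closed sets, this yields $\tau^{*}(M)=\tau'(M)$. First I would check the inclusion $\mathcal{V'}(M)\subseteq\mathcal{V^{*}}(M)$: given an ideal $I$ of $R$, the element $Ie$ is a submodule element of $M$, so by Proposition \ref{V^{*}(n) = V^{*}((n:e)e)}(iii) we have $V(Ie)=V^{*}(Ie)$, and $V^{*}(Ie)\in\mathcal{V^{*}}(M)$ by definition. Hence every closed set of $\tau'(M)$ is a closed set of $\tau^{*}(M)$.

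For the reverse inclusion $\mathcal{V^{*}}(M)\subseteq\mathcal{V'}(M)$, take an arbitrary submodule element $n$ of $M$. The key observation is that $(n:e)$ is an ideal of $R$, so $(n:e)e$ is of the form $Ie$ with $I=(n:e)$, and therefore $V((n:e)e)\in\mathcal{V'}(M)$. By Proposition \ref{V^{*}(n) = V^{*}((n:e)e)}(ii) we have $V^{*}(n)=V((n:e)e)$, so $V^{*}(n)\in\mathcal{V'}(M)$. This shows every $\tau^{*}(M)$-closed set is $\tau'(M)$-closed, completing the equality of the closed-set families and hence of the topologies.

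I expect no serious obstacle here: the entire argument is an assembly of parts (ii) and (iii) of Proposition \ref{V^{*}(n) = V^{*}((n:e)e)} together with the already-noted fact (from Lemma \ref{union intersection}) that $\mathcal{V'}(M)$ genuinely is the closed-set family of a topology. The only point that needs a word of care is making explicit that matching closed-set families forces the topologies to be identical — i.e. that both $\tau^{*}(M)$ and $\tau'(M)$ live on the same underlying set $\mathrm{Spec}(M)$ and that a topology is uniquely recovered from its collection of closed sets by taking complements. Once that is said, the theorem follows immediately.
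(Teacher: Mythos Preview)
Your proposal is correct and follows essentially the same approach as the paper: both arguments establish $\mathcal{V^{*}}(M)=\mathcal{V'}(M)$ by invoking parts (ii) and (iii) of Proposition \ref{V^{*}(n) = V^{*}((n:e)e)} for the two inclusions, with the only cosmetic difference being the order in which the inclusions are presented.
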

\begin{proof}
It is suffices to prove that $\mathcal{V^{*}}(M) = \mathcal{V'}(M)$. Let $V^{*}(n)$ be a closed set in $\mathcal{V^{*}}(M)$ for some submodule element $n$ of $M$. Then by Proposition \ref{V^{*}(n) = V^{*}((n:e)e)}, $V^{*}(n) = V((n:e)e) = V(Ie)$, where $(n:e) = I$, an ideal of $R$. Thus every closed set in $\mathcal{V^{*}}(M)$ is a closed set in $\mathcal{V'}(M)$ and hence $\mathcal{V^{*}}(M) \subseteq \mathcal{V'}(M)$. Now let $V(Ie)$ be a closed set in $\mathcal{V'}(M)$ for some ideal $I$ of $R$. Again by Proposition \ref{V^{*}(n) = V^{*}((n:e)e)}, $V(Ie) = V^{*}(Ie)$. Since $Ie$ is a submodule element of $M$, $V(Ie)$ is a closed set in $\mathcal{V^{*}}(M)$. Thus $\mathcal{V'}(M) \subseteq \mathcal{V^{*}}(M)$. Therefore $\mathcal{V^{*}}(M) = \mathcal{V'}(M)$.
\end{proof}

An le-module $M$ is called a \emph{top le-module} if $\mathcal{V}(M)$ is closed under finite unions. Hence if $M$ is a top le-module, then $\tau(M)=\{X^M \setminus V(n) \mid n \; is\; a\; submodule\; element\; of\; M \}$ becomes a topology on $X^M$.
\begin{theorem}      \label{finer}
For any top le-module $_{R}M$, the quasi-Zariski topology $\tau(M)$ on Spec$(M)$ is finer than the Zariski topology $\tau^{*}(M) = \tau'(M)$.
\end{theorem}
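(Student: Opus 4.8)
The plan is to compare the two families of closed sets directly. Since $M$ is a top le-module, $\tau(M)$ is the topology on $X^M$ whose closed sets are precisely the members of $\mathcal{V}(M)=\{V(n) : n \text{ is a submodule element of } M\}$, while the closed sets of $\tau^{*}(M)$ are the members of $\mathcal{V^{*}}(M)$. The assertion that $\tau(M)$ is finer than $\tau^{*}(M)$ is, by definition, the inclusion $\tau^{*}(M)\subseteq\tau(M)$ of collections of open sets; passing to complements in $X^M$, this is equivalent to $\mathcal{V^{*}}(M)\subseteq\mathcal{V}(M)$. So it suffices to show that every $\tau^{*}$-closed set is also $\tau$-closed.

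First I would invoke Theorem \ref{identical}, which gives $\tau^{*}(M)=\tau'(M)$, hence $\mathcal{V^{*}}(M)=\mathcal{V'}(M)=\{V(Ie) : I \text{ is an ideal of } R\}$. The claim thus reduces to $\mathcal{V'}(M)\subseteq\mathcal{V}(M)$. But this is immediate from the preliminaries: for every ideal $I$ of $R$, the element $Ie$ is a submodule element of $M$, so $V(Ie)$ is by definition a member of $\mathcal{V}(M)$ (and indeed $V(Ie)=V^{*}(Ie)$ by Proposition \ref{V^{*}(n) = V^{*}((n:e)e)}(iii)). Therefore $\mathcal{V^{*}}(M)=\mathcal{V'}(M)\subseteq\mathcal{V}(M)$, so every open set of $\tau^{*}(M)$, being the complement in $X^M$ of a member of $\mathcal{V^{*}}(M)\subseteq\mathcal{V}(M)$, is open in $\tau(M)$; that is, $\tau^{*}(M)\subseteq\tau(M)$.

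There is essentially no serious obstacle here; the substantive work has already been absorbed into Theorem \ref{identical} and Proposition \ref{V^{*}(n) = V^{*}((n:e)e)}. The only point that needs a moment's care is the bookkeeping about the meaning of ``finer'': the direction that holds is $\mathcal{V^{*}}(M)\subseteq\mathcal{V}(M)$, which works because \emph{every} $Ie$ is a submodule element, whereas a general submodule element $n$ need not have $V(n)\in\mathcal{V'}(M)$, so the reverse inclusion can fail. It would be natural (though not required for the statement as phrased) to append an example in which $\tau(M)$ is strictly finer than $\tau^{*}(M)$, so that ``finer'' is understood in the non-strict sense.
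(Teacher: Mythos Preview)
Your proof is correct and follows essentially the same approach as the paper: use Theorem \ref{identical} to identify $\mathcal{V^{*}}(M)$ with $\mathcal{V'}(M)$, then observe that $\mathcal{V'}(M)\subseteq\mathcal{V}(M)$ because every $Ie$ is a submodule element. The paper's argument is just a terser version of yours, and your added remark that $V(Ie)=V^{*}(Ie)$ via Proposition \ref{V^{*}(n) = V^{*}((n:e)e)}(iii) is not needed for the inclusion but does no harm.
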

\begin{proof}
Clearly $\mathcal{V'}(M) \subseteq \mathcal{V}(M)$. Then by Theorem \ref{identical}, $\mathcal{V^{*}}(M) = \mathcal{V'}(M) \subseteq \mathcal{V}(M)$ and hence the quasi-Zariski topology $\tau(M)$ on Spec$(M)$ is finer than the Zariski topology $\tau^{*}(M)$.
\end{proof}

\section{Relation between Spec(M) and Spec(R/Ann(M))}      \label{continuous-homeomorphic}
Let $_{R}M$ be an le-module. Then Ann$(M)$ is an ideal of $R$, which allows us to consider the quotient ring $\overline{R} = R/Ann(M)$. The image of every element $r \in R$ and every ideal $I$ of $R$ such that $Ann(M) \subseteq I$ under the canonical epimorphism $\phi: R\rightarrow R/Ann(M)$ will be denoted by $\overline{r}$ and $\overline{I}$, respectively. Then for every prime ideal $P$ of $R$ and $Ann(M) \subseteq P$, the ideal $\overline{P}$ is prime in $\overline{R}$. Hence the mapping $\psi: X^{M}\rightarrow X^{\overline{R}}$ defined by
\begin{center}
$\psi(p) = \overline{(p:e)}$ for every $p \in X^{M}$
\end{center}
is well defined. We call $\psi$ the natural map on $X^{M}$.

In this section we study relationship of $X^{M}$ and $X^{\overline{R}}$ under the natural map. Here we are interested in conditions under which $\psi$ is injective, surjective, open, closed, and homeomorphic.

\begin{proposition}       \label{continuous}
For any le-module $M$, the natural map $\psi$ of $X^{M}$ is continuous for the Zariski topologies; more precisely , $\psi^{-1}(V^{\overline{R}}(\overline{I})) = V(Ie)$ for every ideal $I$ of $R$ containing Ann$(M)$.
\end{proposition}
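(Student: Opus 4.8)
The plan is to establish the displayed set equality $\psi^{-1}(V^{\overline{R}}(\overline{I})) = V(Ie)$ for every ideal $I$ of $R$ with $\mathrm{Ann}(M) \subseteq I$; continuity of $\psi$ then follows formally. Indeed, the closed subsets of $X^{\overline{R}}$ in the Zariski topology are precisely the sets $V^{\overline{R}}(\overline{I})$ with $\overline{I}$ an ideal of $\overline{R}$, and every ideal of $\overline{R}$ is of the form $\overline{I}$ for a unique ideal $I$ of $R$ containing $\mathrm{Ann}(M)$; since $V(Ie) = V^{*}(Ie)$ is closed in $\tau^{*}(M)$ by Proposition \ref{V^{*}(n) = V^{*}((n:e)e)}(iii), the preimage under $\psi$ of every closed set is closed, which is exactly continuity.

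To prove the equality, fix such an $I$ and trace an arbitrary $p \in X^{M}$ through the definitions. By definition of $\psi$ and of $V^{\overline{R}}$, we have $p \in \psi^{-1}(V^{\overline{R}}(\overline{I}))$ if and only if $\overline{(p:e)} = \psi(p) \in V^{\overline{R}}(\overline{I})$, i.e. if and only if $\overline{I} \subseteq \overline{(p:e)}$ in $\overline{R}$. The first step is to observe that $\mathrm{Ann}(M) = (0_M : e) \subseteq (p:e)$, which holds because $0_M \leqslant p$ for every submodule element $p$ (so $(0_M:e)\subseteq(p:e)$ by monotonicity of $(\cdot : e)$); hence both $I$ and $(p:e)$ lie in the sublattice of ideals of $R$ containing $\mathrm{Ann}(M)$, and the order isomorphism between that sublattice and the ideals of $\overline{R}$ gives $\overline{I} \subseteq \overline{(p:e)}$ if and only if $I \subseteq (p:e)$. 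The second step invokes the Lemma recalled in Section~\ref{preliminaries} from \cite{Bhuniya} (for an ideal $I$ of $R$ and a submodule element $p$ of $M$, $Ie \leqslant p$ if and only if $I \subseteq (p:e)$) to rewrite $I \subseteq (p:e)$ as $Ie \leqslant p$, i.e. as $p \in V(Ie)$. Chaining these equivalences yields $p \in \psi^{-1}(V^{\overline{R}}(\overline{I}))$ iff $p \in V(Ie)$, which is the asserted identity.

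I do not anticipate a genuine obstacle; the proof is a careful unwinding of definitions. The one point that must not be glossed over is the observation $\mathrm{Ann}(M) \subseteq (p:e)$ for every prime submodule element $p$, since this is what licenses passing back and forth between the ideal $\overline{I}$ of $\overline{R}$ and the ideal $I$ of $R$ through the correspondence theorem; the rest is the recalled Lemma together with Proposition \ref{V^{*}(n) = V^{*}((n:e)e)}.
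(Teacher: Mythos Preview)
Your proposal is correct and follows essentially the same route as the paper: both arguments reduce the set equality to the equivalence $\overline{I}\subseteq\overline{(p:e)}\Leftrightarrow I\subseteq(p:e)\Leftrightarrow Ie\leqslant p$. The paper verifies the two inclusions separately (using $Ie\leqslant(p:e)e\leqslant p$ in one direction and $I\subseteq(Ie:e)\subseteq(q:e)$ in the other), whereas you invoke Lemma~2.1 as a single biconditional; you are also more explicit than the paper about why $\mathrm{Ann}(M)\subseteq(p:e)$ and about why the displayed identity yields continuity, but these are matters of presentation rather than substance.
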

\begin{proof}
Let $I$ be an ideal of $R$ containing Ann$(M)$ and let $p \in \psi^{-1}(V^{\overline{R}}(\overline{I})$. Then there exists some $\overline{J} \in V^{\overline{R}}(\overline{I})$ such that $\psi(p) = \overline{J}$, i.e, $(p:e)/Ann(M) = J/Ann(M)$. This implies that $(p:e) = J \supseteq I$ and so $Ie \leqslant (p:e)e \leqslant p$. Hence $p \in V(Ie)$. Therefore $\psi^{-1}(V^{\overline{R}}(\overline{I})) \subseteq V(Ie)$. Now let $q \in V(Ie)$. Then $I \subseteq (Ie:e) \subseteq (q:e)$ implies that $\overline{I} = I/Ann(M) \subseteq (q:e)/Ann(M) = \overline{(q:e)}$. Hence $q \in \psi^{-1}(V^{\overline{R}}(\overline{I}))$. Thus $V(Ie) \subseteq \psi^{-1}(V^{\overline{R}}(\overline{I}))$. Therefore $\psi^{-1}(V^{\overline{R}}(\overline{I})) = V(Ie)$.
\end{proof}
\begin{proposition}      \label{injective}
The following conditions are equivalent for any le-module $_{R}M$:\\
(i) The natural map $\psi: X^{M}\rightarrow X^{\overline{R}}$ is injective;\\
(ii) For every $p, q \in X^{M}$, $V^{*}(p) = V^{*}(q)$ implies that $p =q$;\\
(iii) $|Spec_P(M)| \leqslant 1$ for every $p \in Spec(R)$.
\end{proposition}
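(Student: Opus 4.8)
The plan is to prove the cycle (i) $\Rightarrow$ (iii) $\Rightarrow$ (ii) $\Rightarrow$ (i). The key technical link is Proposition \ref{V^{*}(n) = V^{*}((n:e)e)}(i), which tells us that $V^{*}(p)$ depends on $p$ only through the prime ideal $(p:e)$: indeed $V^{*}(p) = \cup_{Q \in V^{R}((p:e))} Spec_{Q}(M)$. Combined with the fact that $p \in Spec_{(p:e)}(M)$, this shows that two prime submodule elements with the same associated prime lie in exactly the same $V^{*}$-sets, and conversely.

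First I would show (i) $\Rightarrow$ (iii). Suppose $\psi$ is injective and fix $P \in Spec(R)$. If $p, q \in Spec_{P}(M)$, then $(p:e) = P = (q:e)$, so $\psi(p) = \overline{(p:e)} = \overline{(q:e)} = \psi(q)$ (this requires $Ann(M) \subseteq P$, which holds because $Ann(M) = (0_M:e) \subseteq (p:e) = P$). Injectivity of $\psi$ forces $p = q$, hence $|Spec_P(M)| \leqslant 1$.

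Next, (iii) $\Rightarrow$ (ii). Assume every fiber $Spec_P(M)$ has at most one element, and suppose $p, q \in X^{M}$ satisfy $V^{*}(p) = V^{*}(q)$. By Proposition \ref{V^{*}(n) = V^{*}((n:e)e)}(i), $V^{*}(p) = \cup_{Q \in V^{R}((p:e))} Spec_{Q}(M)$. Since $p$ is a prime submodule element, $p \in Spec_{(p:e)}(M) \subseteq V^{*}(p) = V^{*}(q)$, so there is a prime $Q_0 \supseteq (q:e)$ with $p \in Spec_{Q_0}(M)$; but $p \in Spec_{(p:e)}(M)$ already, so $(p:e) = Q_0 \supseteq (q:e)$. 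By symmetry $(q:e) \supseteq (p:e)$, hence $(p:e) = (q:e) =: P$. Then both $p$ and $q$ lie in $Spec_P(M)$, and (iii) gives $p = q$.

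Finally, (ii) $\Rightarrow$ (i). Suppose $\psi(p) = \psi(q)$ for $p, q \in X^{M}$. Then $\overline{(p:e)} = \overline{(q:e)}$ in $\overline{R} = R/Ann(M)$; since both $(p:e)$ and $(q:e)$ contain $Ann(M)$, pulling back along the canonical epimorphism gives $(p:e) = (q:e)$. By Proposition \ref{V^{*}(n) = V^{*}((n:e)e)}(ii) (or the preceding proposition showing $(n:e)=(l:e)$ implies $V^{*}(n)=V^{*}(l)$), we get $V^{*}(p) = V^{*}(q)$, and (ii) yields $p = q$. This closes the cycle. I do not anticipate a serious obstacle here; the only point demanding care is the repeated use of the correspondence between ideals of $R$ containing $Ann(M)$ and ideals of $\overline{R}$, and the systematic appeal to $p \in Spec_{(p:e)}(M)$ to "anchor" a prime submodule element inside its own variety.
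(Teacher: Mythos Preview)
Your proof is correct and matches the paper's argument in substance: both rest on the equivalences $\psi(p)=\psi(q)\Leftrightarrow (p:e)=(q:e)\Leftrightarrow V^{*}(p)=V^{*}(q)$, together with the fiber description $Spec_{P}(M)$. The only cosmetic differences are that the paper runs the cycle in the order (i)$\Rightarrow$(ii)$\Rightarrow$(iii)$\Rightarrow$(i), and that in your (iii)$\Rightarrow$(ii) you invoke Proposition~\ref{V^{*}(n) = V^{*}((n:e)e)}(i) where the definition of $V^{*}$ already gives $p\in V^{*}(q)\Rightarrow (q:e)\subseteq (p:e)$ directly; this is harmless but unnecessary.
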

\begin{proof}
(i)$\Rightarrow$ (ii): Let $V^{*}(p) = V^{*}(q)$. Then $(p:e) = (q:e)$ which implies that $(p:e)/Ann(M) = (q:e)/Ann(M)$. Thus $\psi(p) = \psi(q)$ and hence $p =q$, since $\psi$ is injective.\\
(ii)$\Rightarrow$ (iii): Let $p, q \in Spec_{P}(M)$, where $P \in Spec(R)$. Then $(p:e) = P = (q:e)$ which implies that $V^{*}(p) = V^{*}(q)$. Hence $p = q$, by (ii).\\
(iii)$\Rightarrow$ (i): Let $p, q \in X^{M}$ be such that $\psi(p) = \psi(q)$. Then $(p:e)/Ann(M) = (q:e)/Ann(M)$. This implies that $(p:e) = (q:e) = P$, say. Thus $p, q \in Spec_P(M)$ and so $p =q$, by (iii). Therefore $\psi$ is injective.
\end{proof}

\begin{theorem}       \label{open and closed}
Let $_{R}M$ be an le-module and $\psi: X^{M}\rightarrow X^{\overline{R}}$ be the natural map of $X^{M}$. If $\psi$ is surjective, then $\psi$ is both closed and open. More precisely, for every submodule element $n$ of $M$, $\psi(V^{*}(n)) = V^{\overline{R}}\overline{(n:e)}$ and $\psi(X^{M}-V^{*}(n)) = X^{\overline{R}} - V^{\overline{R}}\overline{(n:e)}$.
\end{theorem}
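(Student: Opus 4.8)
The plan is to prove the two displayed identities $\psi(V^{*}(n)) = V^{\overline{R}}\overline{(n:e)}$ and $\psi(X^{M}-V^{*}(n)) = X^{\overline{R}} - V^{\overline{R}}\overline{(n:e)}$ first, and then deduce that $\psi$ is closed and open. Closedness follows immediately from the first identity, since by Theorem \ref{identical} every closed set of $\tau^{*}(M)$ has the form $V^{*}(n) = V(Ie)$ for some ideal $I \supseteq Ann(M)$ (one may take $I = (n:e)$, and note $Ann(M) = (0_M:e) \subseteq (n:e)$), and $V^{\overline{R}}\overline{(n:e)}$ is closed in $X^{\overline{R}}$; openness follows from the second identity similarly, taking complements. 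So the whole theorem reduces to establishing those two set equalities.

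For the first identity, I would argue by double inclusion. For $\psi(V^{*}(n)) \subseteq V^{\overline{R}}\overline{(n:e)}$: if $p \in V^{*}(n)$ then $(n:e) \subseteq (p:e)$, hence passing to the quotient $\overline{(n:e)} \subseteq \overline{(p:e)} = \psi(p)$, so $\psi(p) \in V^{\overline{R}}\overline{(n:e)}$; this inclusion does not need surjectivity. For the reverse inclusion $V^{\overline{R}}\overline{(n:e)} \subseteq \psi(V^{*}(n))$: take $\overline{P} \in V^{\overline{R}}\overline{(n:e)}$, where $P$ is a prime ideal of $R$ containing $Ann(M)$ with $\overline{(n:e)} \subseteq \overline{P}$, equivalently $(n:e) \subseteq P$. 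Here is where surjectivity of $\psi$ enters: since $\psi$ is onto, there is some $p \in X^{M}$ with $\psi(p) = \overline{P}$, i.e. $\overline{(p:e)} = \overline{P}$, which gives $(p:e) = P$ (both contain $Ann(M)$, so the quotient map is injective on them). Then $(n:e) \subseteq P = (p:e)$ shows $p \in V^{*}(n)$, hence $\overline{P} = \psi(p) \in \psi(V^{*}(n))$.

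For the second identity I would use the first one together with surjectivity: for any subset $A \subseteq X^{M}$ of the form $A = V^{*}(n)$, surjectivity of $\psi$ gives $\psi(X^{M} - A) \supseteq X^{\overline{R}} - \psi(A)$ automatically, and the reverse inclusion $\psi(X^{M}-A) \subseteq X^{\overline{R}} - \psi(A)$ is what actually requires an argument. Suppose $q \in X^{M} - V^{*}(n)$, so $(n:e) \not\subseteq (q:e)$; I must show $\psi(q) = \overline{(q:e)} \notin V^{\overline{R}}\overline{(n:e)}$, i.e. $\overline{(n:e)} \not\subseteq \overline{(q:e)}$. Pick $r \in (n:e) \setminus (q:e)$; since $(q:e) \supseteq Ann(M)$, the element $r \notin (q:e)$ has image $\overline{r} \notin \overline{(q:e)}$, while $\overline{r} \in \overline{(n:e)}$, establishing the needed non-containment. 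Combining with $\psi(X^{M}-V^{*}(n)) \supseteq X^{\overline{R}}-\psi(V^{*}(n)) = X^{\overline{R}} - V^{\overline{R}}\overline{(n:e)}$ from surjectivity and the first identity, we get equality.

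The main obstacle — really the only substantive point — is the appearance of surjectivity in two places: lifting a prime of $\overline{R}$ to a prime submodule element of $M$ in the first identity, and guaranteeing $X^{\overline{R}} - \psi(A) \subseteq \psi(X^{M}-A)$ in the second. Everything else is bookkeeping about the canonical epimorphism $\phi$ being injective on ideals containing $Ann(M)$, which lets one pass containments of such ideals back and forth between $R$ and $\overline{R}$ freely; and the deduction that $\psi$ is open and closed is then a one-line consequence of Theorem \ref{identical}.
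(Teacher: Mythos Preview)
Your proof is correct, but it takes a different and somewhat longer route than the paper. The paper exploits Proposition~\ref{continuous} together with Proposition~\ref{V^{*}(n) = V^{*}((n:e)e)}(ii) to get directly that $V^{*}(n) = V((n:e)e) = \psi^{-1}\bigl(V^{\overline{R}}(\overline{(n:e)})\bigr)$; then both identities follow in one stroke from the general set-theoretic fact that a surjection satisfies $\psi\bigl(\psi^{-1}(B)\bigr) = B$, applied once to $B = V^{\overline{R}}(\overline{(n:e)})$ and once to its complement. Your argument instead establishes each inclusion by element chasing, passing containments of ideals back and forth across the quotient map. Both are valid; the paper's approach is cleaner because it recycles the preimage computation already done in Proposition~\ref{continuous} and avoids the separate non-containment argument you give for the second identity, while your approach has the virtue of being self-contained and making the role of surjectivity fully explicit at each step.
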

\begin{proof}
By the Theorem \ref{continuous}, we have $\psi$ is a continuous map and $\psi^{-1}(V^{\overline{R}}(\overline{I})) = V(Ie)$, for every ideal $I$ of $R$ containing $Ann(M)$. Thus for every submodule element $n$ of $M$, $\psi^{-1}(V^{\overline{R}}(\overline{n:e})) = V((n:e)e) = V^{*}(n)$. This implies that $\psi(V^{*}(n)) = \psi o \psi^{-1} (V^{\overline{R}}(\overline{n:e})) = V^{\overline{R}}(\overline{n:e})$, since $\psi$ is surjective. Similarly $\psi(X^{M}-V^{*}(n)) = \psi(\psi^{-1}(X^{\overline{R}}) - \psi^{-1}(V^{\overline{R}}(\overline{n:e}))) = \psi(\psi^{-1}(X^{\overline{R}} - V^{\overline{R}}(\overline{n:e}))) = \psi o \psi^{-1}(X^{\overline{R}} - V^{\overline{R}}(\overline{n:e})) = X^{\overline{R}} - V^{\overline{R}}(\overline{n:e})$. Thus $\psi$ is both closed and open.
\end{proof}
\begin{corollary}      \label{homeomorphic}
Let $_{R}M$ be an le-module and $\psi: X^{M}\rightarrow X^{\overline{R}}$ be the natural map of $X^{M}$. Then $\psi$ is bijective if and only if $\psi$ is homeomorphic.
\end{corollary}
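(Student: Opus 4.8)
The plan is to read the corollary directly off the two preceding results. One direction is immediate: every homeomorphism is in particular a bijection, so if $\psi$ is homeomorphic then $\psi$ is bijective, and nothing needs to be done.

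For the converse, I would argue as follows. Suppose $\psi$ is bijective; in particular $\psi$ is surjective, so Theorem~\ref{open and closed} applies and gives, for every submodule element $n$ of $M$, the identity $\psi(X^{M}-V^{*}(n)) = X^{\overline{R}} - V^{\overline{R}}\overline{(n:e)}$. Since the closed sets of the Zariski topology $\tau^{*}(M)$ are by definition exactly the sets $V^{*}(n)$, the open sets of $\tau^{*}(M)$ are exactly the sets $X^{M}-V^{*}(n)$; and $X^{\overline{R}} - V^{\overline{R}}\overline{(n:e)}$ is the complement of a closed set of $X^{\overline{R}}$, hence open there. Thus $\psi$ carries every open set to an open set, i.e. $\psi$ is an open map. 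By Proposition~\ref{continuous}, $\psi$ is continuous. Finally, a continuous open bijection is a homeomorphism: its set-theoretic inverse $\psi^{-1}$ is continuous because for every open $U\subseteq X^{M}$ the preimage $(\psi^{-1})^{-1}(U)=\psi(U)$ is open in $X^{\overline{R}}$. Hence $\psi$ is a homeomorphism.

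I do not expect any genuine obstacle here; the corollary is a formal consequence of the machinery already built. The only point that deserves a line of care is that Theorem~\ref{open and closed} may be invoked only when $\psi$ is surjective, and surjectivity is precisely the half of bijectivity we are handed. One could equally run the argument through the ``closed'' half of Theorem~\ref{open and closed}, using that a continuous closed bijection is likewise a homeomorphism; either route works, and I would present the ``open map'' version for brevity.
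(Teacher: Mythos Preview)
Your argument is correct and is exactly the route the paper intends: the corollary is stated without proof immediately after Proposition~\ref{continuous} and Theorem~\ref{open and closed}, and your write-up simply spells out the obvious inference that a bijective $\psi$ is continuous (Proposition~\ref{continuous}) and open/closed (Theorem~\ref{open and closed}, applicable since bijectivity gives surjectivity), hence a homeomorphism. There is nothing to add or correct.
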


A commutative ring R with 1 is said to be a \emph{quasi-local ring} if it has a unique maximal ideal.
\begin{theorem}
Let $_{R}M$ be an le-module and $\psi: X^{M}\rightarrow X^{\overline{R}}$ be the surjective natural map of $X^{M}$. Then the following statements are equivalent:\\
(i) $X^{M} = Spec(M)$ is connected;\\
(ii) $X^{\overline{R}} = Spec(\overline{R})$ is connected;\\
(iii) The ring $\overline{R}$ contains no idempotent other than $\overline{0}$ and $\overline{1}$.\\
Consequently, if either $R$ is a quasi-local ring or $Ann(M)$ is a prime ideal of $R$, then both $X^{M}$ and $X^{\overline{R}}$ are connected.
\end{theorem}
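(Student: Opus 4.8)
The plan is to establish the cycle of equivalences (i)$\Leftrightarrow$(ii)$\Leftrightarrow$(iii) and then read off the last sentence. For (i)$\Rightarrow$(ii) there is nothing to do beyond invoking Proposition \ref{continuous}: $\psi$ is continuous and, by hypothesis, surjective, so $X^{\overline{R}}=\psi(X^{M})$ is a continuous image of a connected space, hence connected.

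The real content is (ii)$\Rightarrow$(i), and I expect this to be the main obstacle, since a continuous surjection need not reflect connectedness. The key lemma I would isolate is that the closed subsets of $X^{M}$ are precisely the sets $\psi^{-1}(D)$ with $D$ closed in $X^{\overline{R}}$. For the inclusion ``$\subseteq$'': an arbitrary closed set of $X^{M}$ has the form $V^{*}(n)=V((n:e)e)$ by Proposition \ref{V^{*}(n) = V^{*}((n:e)e)}(ii); since $0_{M}\leqslant n$ we have $Ann(M)=(0_{M}:e)\subseteq(n:e)$, so Proposition \ref{continuous} gives $V((n:e)e)=\psi^{-1}(V^{\overline{R}}(\overline{(n:e)}))$. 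For ``$\supseteq$'': any closed $D\subseteq X^{\overline{R}}$ is $V^{\overline{R}}(\overline{I})$ for some ideal $I\supseteq Ann(M)$, and $\psi^{-1}(V^{\overline{R}}(\overline{I}))=V(Ie)=V^{*}(Ie)$ is closed in $X^{M}$ by Propositions \ref{continuous} and \ref{V^{*}(n) = V^{*}((n:e)e)}(iii). Granting this, suppose $X^{M}$ were disconnected, $X^{M}=A\sqcup B$ with $A,B$ nonempty clopen. Writing $A$ as a closed set, $A=\psi^{-1}(D)$ with $D$ closed; writing $B$ as a closed set and complementing, $A=\psi^{-1}(O)$ with $O$ open. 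Surjectivity of $\psi$ yields $D=\psi(\psi^{-1}(D))=\psi(A)=\psi(\psi^{-1}(O))=O$, so $D$ is clopen in $X^{\overline{R}}$; it is nonempty because $A\neq\emptyset$ and proper because $A\neq X^{M}=\psi^{-1}(X^{\overline{R}})$. Hence $X^{\overline{R}}$ is disconnected, giving (ii)$\Rightarrow$(i) by contraposition. (Equivalently: by Theorem \ref{open and closed}, $\psi$ is a closed surjection, hence a quotient map, and the argument shows every clopen subset of $X^{M}$ is $\psi$-saturated.)

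For (ii)$\Leftrightarrow$(iii) I would either cite the classical fact (\cite{Atiyah}) that the prime spectrum of a commutative ring is connected exactly when the ring has no idempotents besides $0$ and $1$, or recall it quickly: a nontrivial idempotent $\overline{e}\in\overline{R}$ produces the proper clopen decomposition $X^{\overline{R}}=V^{\overline{R}}((\overline{e}))\sqcup V^{\overline{R}}((\overline{1}-\overline{e}))$, and conversely a proper clopen decomposition $X^{\overline{R}}=V^{\overline{R}}(\overline{I})\sqcup V^{\overline{R}}(\overline{J})$ forces $\overline{I}+\overline{J}=\overline{R}$ and $\overline{I}\,\overline{J}$ nilpotent, whence lifting idempotents modulo a nil ideal yields a nontrivial idempotent. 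Finally, the closing assertion is immediate from (iii): if $R$ is quasi-local then so is the quotient $\overline{R}=R/Ann(M)$, and in a quasi-local ring the relation $\overline{e}(\overline{1}-\overline{e})=\overline{0}$ forces one of $\overline{e},\overline{1}-\overline{e}$ to be a unit (they cannot both lie in the unique maximal ideal), hence $\overline{e}\in\{\overline{0},\overline{1}\}$; if $Ann(M)$ is prime then $\overline{R}$ is an integral domain, so again $\overline{e}^{2}=\overline{e}$ implies $\overline{e}\in\{\overline{0},\overline{1}\}$. In both cases (iii) holds, so $X^{M}$ and $X^{\overline{R}}$ are connected.
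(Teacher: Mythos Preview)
Your proof is correct and follows the same overall architecture as the paper: (i)$\Rightarrow$(ii) by pushing connectedness along the continuous surjection $\psi$, (ii)$\Rightarrow$(i) by contraposition using the tight relationship between the topologies on $X^{M}$ and $X^{\overline{R}}$, and (ii)$\Leftrightarrow$(iii) by citation of the classical ring-theoretic fact. The one place where your execution diverges from the paper is in (ii)$\Rightarrow$(i). The paper invokes Theorem~\ref{open and closed} directly to conclude that $\psi(Y)$ is clopen for a nonempty proper clopen $Y\subseteq X^{M}$, and then does an explicit computation to show $\psi(Y)$ is proper: writing $Y=X^{M}\setminus V^{*}(n)$, if $\psi(Y)=X^{\overline{R}}$ then $V^{\overline{R}}(\overline{(n:e)})=\emptyset$, forcing $\overline{(n:e)}=\overline{R}$, hence $(n:e)=R$ and $n=e$, contradicting properness of $Y$. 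You instead isolate the sharper lemma that the closed sets of $X^{M}$ are \emph{exactly} the $\psi$-preimages of closed sets of $X^{\overline{R}}$; this makes every clopen of $X^{M}$ automatically $\psi$-saturated, so properness of $\psi(Y)$ falls out immediately from injectivity of $\psi^{-1}$ on subsets of $X^{\overline{R}}$, with no need for the $n=e$ computation. Your route is a bit more conceptual (it identifies the topology on $X^{M}$ as the initial topology induced by $\psi$), while the paper's is more hands-on; both rest on Proposition~\ref{continuous} and Proposition~\ref{V^{*}(n) = V^{*}((n:e)e)}. You also supply the short argument for the final ``Consequently'' clause via (iii), which the paper leaves implicit.
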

\begin{proof}
(i) $\Rightarrow$ (ii): From Theorem \ref{continuous}, we have that $\psi$ is a continuous map. Then (ii) follows from the fact that $\psi$ is surjective and continuous image of a connected space is connected.\\
(ii) $\Rightarrow$ (i): Let $X^{\overline{R}}$ be connected. If possible assume that $X^{M}$ is disconnected. Then $X^{M}$ must contain a non-empty proper subset $Y$ which is both open and closed. By Theorem \ref{open and closed}, $\psi(Y)$ is a non-empty subset of $X^{\overline{R}}$ that is both open and closed. We assert that $\psi(Y)$ is a proper subset of $X^{\overline{R}}$. Since $Y$ is open, $Y = X^{M} - V^{*}(n)$ for some submodule element $n$ of $M$. Then by Theorem \ref{open and closed}, $\psi(Y) = \psi(X^{M} - V^{*}(n)) = X^{\overline{R}} - V^{\overline{R}}\overline{(n:e)}$. Therefore, if $\psi(Y) = X^{\overline{R}}$, then $V^{\overline{R}}\overline{(n:e)} = \emptyset$. Now suppose that $\overline{(n:e)} \neq \overline{R}$. Then $\overline{(n:e)}$ is a proper ideal of $\overline{R}$ and so contained in a maximal ideal, say $\overline{P}$ of $\overline{R}$, which is also a prime ideal of $\overline{R}$. Thus $\overline{(n:e)} \subseteq \overline{P}$ and hence $\overline{P} \in V^{\overline{R}}\overline{(n:e)}$, i.e, $V^{\overline{R}}\overline{(n:e)} \neq \emptyset$, a contradiction. Thus $\overline{(n:e)} = \overline{R}$, i.e, $n =e$. This implies that $Y = X^{M} - V^{*}(n) = X^{M} - V^{*}(e) = X^{M}$, which is an absurd since $Y$ is a proper subset of $X^{M}$. Thus $\psi(Y)$ is a proper subset of $X^{\overline{R}}$ and hence $X^{\overline{R}}$ is disconnected, a contradiction. Therefore $X^{M} = Spec(M)$ is connected.

The equivalence of (ii) and (iii) is well-known \cite{Bourbaki}.
\end{proof}

\section{A base for the Zariski topology on Spec(M)}       \label{Base}
For any element $r$ of a ring $R$, the set $D_{r} = X^{R} - V^{R}(rR)$ is open in $X^{R}$ and the family $\{D_{r} : r \in R\}$ forms a base for the Zariski topology on $X^{R}$. Each $D_{r}$, in particular $D_{1} = X^{R}$ is known to be quasi-compact. In \cite{CPLu1}, Chin-Pi Lu, introduced a base for the Zariski topology on Spec$(M)$ for any $R$-module $M$, which is similar to that on $X^{R}$. In this section, we introduce a base for the Zariski topology on $X^{M}$ for any le-module $_{R}M$.

For each $r \in R$ we define,
\begin{center}
$X_{r} = X^{M} - V^{*}(re)$.
\end{center}
Then every $X_{r}$ is an open set in $X^{M}$. Note that $X_{0} = \emptyset$ and $X_{1} = X^{M}$.

\begin{proposition}     \label{D_{r}}
Let $_{R}M$ be an le-module with the natural map $\psi: X^{M}\rightarrow X^{\overline{R}}$ and $r \in R$. Then\\
(i) $\psi^{-1}(D_{\overline{r}}) = X_{r}$; and \\
(ii) $\psi(X_{r}) \subseteq D_{\overline{r}}$; the equality holds if $\psi$ is surjective.
\end{proposition}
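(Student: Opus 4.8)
The plan is to unravel both claims directly from the definitions of $X_r$, $D_{\overline r}$, and the natural map $\psi$, using Proposition \ref{continuous} as the main engine. Recall that $D_{\overline r} = X^{\overline R} \setminus V^{\overline R}(\overline r\, \overline R)$ and $X_r = X^M \setminus V^*(re)$, while Proposition \ref{V^{*}(n) = V^{*}((n:e)e)}(iii) gives $V(re) = V^*(re)$.

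For part (i), first I would identify the principal ideal generated by $\overline r$ in $\overline R$ as $\overline{rR}$, i.e. the image of the ideal $rR$ of $R$ (note $Ann(M) \subseteq (rR : e)$ is not needed, but we can work with the ideal $rR + Ann(M)$, whose image is still $\overline r\,\overline R$ and which contains $Ann(M)$). Then Proposition \ref{continuous} applies to give $\psi^{-1}(V^{\overline R}(\overline{rR})) = \psi^{-1}(V^{\overline R}(\overline{rR + Ann(M)})) = V((rR + Ann(M))e)$. Since $Ann(M)e \leqslant 0_M \leqslant re$ for the submodule element $re$... more carefully, $(rR + Ann(M))e = rRe \vee Ann(M)e = (rR)e$ because $Ann(M)e = 0_M$, so this equals $V((rR)e)$. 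Finally I would check $V((rR)e) = V(re)$: since $re \leqslant (rR)e$ we get $V((rR)e) \subseteq V(re)$, and conversely if $re \leqslant p$ then $r \in (p:e)$ so $rR \subseteq (p:e)$ (as $(p:e)$ is an ideal) hence $(rR)e \leqslant (p:e)e \leqslant p$. Then $V(re) = V^*(re)$ by Proposition \ref{V^{*}(n) = V^{*}((n:e)e)}(iii). Taking complements in $X^M$ and using that $\psi^{-1}$ commutes with complementation yields $\psi^{-1}(D_{\overline r}) = X^M \setminus \psi^{-1}(V^{\overline R}(\overline r\,\overline R)) = X^M \setminus V^*(re) = X_r$.

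For part (ii), the inclusion $\psi(X_r) \subseteq D_{\overline r}$ follows formally from part (i): for any sets and any map, $\psi(\psi^{-1}(A)) \subseteq A$, so $\psi(X_r) = \psi(\psi^{-1}(D_{\overline r})) \subseteq D_{\overline r}$. When $\psi$ is surjective, $\psi(\psi^{-1}(A)) = A$ for every $A \subseteq X^{\overline R}$, which upgrades the inclusion to equality $\psi(X_r) = D_{\overline r}$. This last point is the only place surjectivity is used, and it is a standard set-theoretic fact rather than a genuine obstacle.

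I do not anticipate a serious obstacle here; the one point requiring a little care is the identification $V((rR + Ann(M))e) = V(re)$ so that Proposition \ref{continuous} (which is stated only for ideals containing $Ann(M)$) can be applied cleanly, and verifying that the submodule element $(rR)e$ produces the same variety as the submodule element $re$. Everything else is a formal manipulation of preimages and complements, so the proof will be short.
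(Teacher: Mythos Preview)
Your proposal is correct and follows essentially the same route as the paper: compute $\psi^{-1}$ of the closed set via Proposition~\ref{continuous}, identify the resulting variety with $V^*(re)$ using Proposition~\ref{V^{*}(n) = V^{*}((n:e)e)}(iii), and then take complements; part (ii) follows formally from (i). If anything, you are slightly more careful than the paper in passing to $rR + Ann(M)$ so that the hypothesis of Proposition~\ref{continuous} is literally satisfied, and in explicitly checking $V((rR)e) = V(re)$.
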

\begin{proof}
(i):  $\psi^{-1}(D_{\overline{r}}) = \psi^{-1}(X^{\overline{R}} - V^{\overline{R}}(\overline{r}\overline{R})) = \psi^{-1}(X^{\overline{R}}) - \psi^{-1}(V^{\overline{R}}(\overline{r}\overline{R})) = \psi^{-1}(V^{\overline{R}}(\overline{0})) - \psi^{-1}(V^{\overline{R}}(\overline{rR})) = V(0_{M}) - V(rRe) = X^{M} - V(re) = X^{M} - V^{*}(re) = X_{r}$, by Proposition \ref{continuous} and Proposition \ref{V^{*}(n) = V^{*}((n:e)e)}(iii).\\
(ii) follows from (i).
\end{proof}

Now we have a useful lemma which will be used in the next theorem:
\begin{lemma}     \label{X_{rs}}
Let $_{R}M$ be an le-module.\\
(i) For every $r,s \in R$, $X_{rs} = X_{r} \cap X_{s}$.\\
(ii) For any ideal $I$ in $R$, $V^{*}(Ie) = \cap_{a \in I}V^{*}(ae)$.
\end{lemma}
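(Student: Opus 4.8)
The plan is to prove the two parts separately, each by double inclusion, using only the definitions of $X_r = X^M - V^*(re)$ and $V^*(n) = \{p \in Spec(M) : (n:e) \subseteq (p:e)\}$, together with the primeness of $(p:e)$ guaranteed by Lemma \ref{prime ideal}, and the already-established identities in Lemma \ref{union intersection}.

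For part (i), I would first observe that $X_{rs} = X_r \cap X_s$ is equivalent, by taking complements in $X^M$, to the identity $V^*(rse) = V^*(re) \cup V^*(se)$. But this is exactly the special case of Lemma \ref{union intersection}(ii) with $I = rR$ and $J = sR$ (noting $(rR)(sR) = rsR$ up to the submodule element it generates, or more directly invoking the explicitly stated ``In particular, $V^*(re) \cup V^*(se) = V^*((rs)e)$''). So part (i) is essentially immediate from a result already in hand; the only thing to spell out is the passage between the union-of-closed-sets statement and the intersection-of-open-sets statement via complementation, which is routine set theory.

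For part (ii), I would show $V^*(Ie) = \bigcap_{a \in I} V^*(ae)$ by double inclusion. For ``$\subseteq$'': if $p \in V^*(Ie)$, then $(Ie : e) \subseteq (p:e)$; since each $a \in I$ satisfies $ae \leqslant Ie$, hence $(ae:e) \subseteq (Ie:e) \subseteq (p:e)$, so $p \in V^*(ae)$ for every $a \in I$. For ``$\supseteq$'': suppose $p \in V^*(ae)$ for all $a \in I$, i.e. $ae \leqslant p$ for all $a\in I$ (using $(ae:e)\subseteq(p:e)$ together with $a \in (ae:e)$, so $ae \leqslant (p:e)e \leqslant p$). Then, because $Ie = \vee\{\sum_{i=1}^k a_i e : k \in \mathbb{N}, a_i \in I\}$ and $p$ is a submodule element (closed under finite sums and under arbitrary joins of things below it), we get $Ie \leqslant p$, hence $I \subseteq (p:e)$, hence $(Ie:e) \subseteq (p:e)$ since $(Ie:e)$ is the smallest ideal $J$ with $Je \leqslant p$... more carefully: $Ie \leqslant p$ gives $I \subseteq (p:e)$ by Lemma 2.1, and then $(Ie:e) \subseteq (p:e)$ follows because $Ie \leqslant p$ directly means $p \in V^*(Ie)$ once we recall $V(Ie) = V^*(Ie)$ from Proposition \ref{V^{*}(n) = V^{*}((n:e)e)}(iii). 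So I would actually route the ``$\supseteq$'' direction through $V(Ie)$: show $Ie \leqslant p$ and conclude $p \in V(Ie) = V^*(Ie)$.

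The only mild obstacle is the join step in part (ii): verifying that $ae \leqslant p$ for every $a \in I$ forces $Ie \leqslant p$. This uses that $p$ is a submodule element, so $p + p = p$ and each finite sum $a_{i_1}e + \cdots + a_{i_k}e \leqslant p$, and then that $p$ is an upper bound of the set whose join defines $Ie$, so the join $Ie$ is $\leqslant p$. This is a short argument but is the one genuine content point; everything else is bookkeeping with complements and with the already-proven equalities $V(Ie) = V^*(Ie)$ and $V^*(re) \cup V^*(se) = V^*(rse)$.
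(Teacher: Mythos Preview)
Your proposal is correct and follows essentially the same route as the paper: part (i) via complements and Lemma \ref{union intersection}(ii), and part (ii) by double inclusion, passing from $p \in V^{*}(ae)$ to $ae \leqslant p$ (the paper does this via $V^{*}(ae)=V(ae)$, your argument through $a \in (ae:e)$ and $(p:e)e \leqslant p$ works equally well), then using that $p$ is an upper bound for all finite sums $a_{1}e+\cdots+a_{k}e$ to conclude $Ie \leqslant p$. The only cosmetic difference is that the paper finishes with $Ie \leqslant p \Rightarrow (Ie:e) \subseteq (p:e)$ directly, while you invoke $V(Ie)=V^{*}(Ie)$; these are the same statement.
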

\begin{proof}
(i) $X_{rs} = X^{M} - V^{*}((rs)e) = X^{M} - (V^{*}(re) \cup V^{*}(se)) = (X^{M} - V^{*}(re)) \cap (X^{M} - V^{*}(se)) = X_{r} \cap X_{s}$, by Lemma \ref{union intersection}(ii).\\
(ii) Let $p \in V^{*}(Ie)$. Then $(Ie:e) \subseteq (p:e)$. Now for all $a \in I$, $ae \leqslant Ie$ implies that $(ae:e) \subseteq (Ie:e) \subseteq (p:e)$. Thus $p \in V^{*}(ae)$ for all $a \in I$ and so $p \in \cap_{a \in I}V^{*}(ae) $. Hence $V^{*}(Ie) \subseteq \cap_{a \in I}V^{*}(ae)$. Also let $p \in \cap_{a \in I}V^{*}(ae)$. Then for all $a \in I$, $p \in V^{*}(ae) = V(ae)$, by Proposition \ref{V^{*}(n) = V^{*}((n:e)e)}, which implies that $ae \leqslant p$. Thus for any $k \in \mathbb{N}$ and $a_{1}, a_{2}, \cdots, a_{k} \in I$, $a_{1}e + a_{2}e + \cdots + a_{k}e \leqslant p$ and hence $Ie \leqslant p$. Then $(Ie:e) \subseteq (p:e)$ and so $p \in V^{*}(Ie)$. Hence $\cap_{a \in I}V^{*}(ae) \subseteq V^{*}(Ie)$. Therefore $V^{*}(Ie) = \cap_{a \in I}V^{*}(ae)$.
\end{proof}

\begin{theorem}      \label{Base for X^{M}}
Let $_{R}M$ be an le-module. Then the set $B = \{X_{r}: r \in R\}$ forms a base for the Zariski topology on $X^{M}$ which may be empty.
\end{theorem}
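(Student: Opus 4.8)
The plan is to use Theorem \ref{identical} to describe the open sets of $\tau^*(M)$ explicitly, and then to exhibit each of them as a union of members of $B$. First I would note that $\mathcal{V}'(M)$ is in fact the \emph{whole} family of closed sets of $\tau'(M)$ — it is closed under finite unions by Lemma \ref{union intersection} and under arbitrary intersections by Proposition \ref{Zariski topology}(iii)(b) — so, since $\tau^*(M)=\tau'(M)$, every open subset $U$ of $X^M$ has the form $U = X^M\setminus V(Ie)$ for some ideal $I$ of $R$. Also every $X_r = X^M\setminus V^*(re)$ is open (this is already observed in the text), so $B\subseteq\tau^*(M)$. Hence it is enough to write an arbitrary such $U$ as a union of the sets $X_r$.

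This reduction is then immediate from two earlier results. By Proposition \ref{V^{*}(n) = V^{*}((n:e)e)}(iii) we have $V(Ie)=V^*(Ie)$, and by Lemma \ref{X_{rs}}(ii) we have $V^*(Ie)=\bigcap_{a\in I}V^*(ae)$. Taking complements in $X^M$,
\[
U = X^M\setminus V(Ie) = X^M\setminus\bigcap_{a\in I}V^*(ae) = \bigcup_{a\in I}\bigl(X^M\setminus V^*(ae)\bigr) = \bigcup_{a\in I}X_a ,
\]
so $U$ is a union of members of $B$. Combined with $B\subseteq\tau^*(M)$, this shows $B$ is a base for $\tau^*(M)$. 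The degenerate case $X^M=\emptyset$ — i.e.\ $M$ has no prime submodule elements — is vacuous, which accounts for the clause ``which may be empty''.

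I do not anticipate a genuine obstacle: the substance is entirely packaged in Theorem \ref{identical}, Proposition \ref{V^{*}(n) = V^{*}((n:e)e)}(iii) and Lemma \ref{X_{rs}}(ii). The only step deserving a moment's attention is the first reduction — that a general open set can be written as $X^M\setminus V(Ie)$ — which needs the fact that $\mathcal{V}'(M)$ is the full collection of closed sets of $\tau^*(M)$ rather than merely a generating subfamily; Lemma \ref{union intersection} together with Proposition \ref{Zariski topology} is exactly what guarantees this. (Incidentally, Lemma \ref{X_{rs}}(i) shows moreover that $B$ is closed under finite intersections, re-confirming the base property directly.)
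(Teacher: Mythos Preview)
Your proposal is correct and follows essentially the same route as the paper: both arguments use Theorem \ref{identical} (i.e.\ $\mathcal{V}^*(M)=\mathcal{V}'(M)$) to write an arbitrary open set as $X^M\setminus V^*(Ie)$, invoke Lemma \ref{X_{rs}}(ii) to decompose $V^*(Ie)=\bigcap_{a\in I}V^*(ae)$, and then take complements. Your extra remark that $\mathcal{V}'(M)$ is already the full family of closed sets (via Lemma \ref{union intersection} and Proposition \ref{Zariski topology}(iii)(b)) is a nice bit of care, but it is implicit in the paper's setup since $\tau^*(M)$ is defined with $\mathcal{V}^*(M)$ as its family of closed sets.
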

\begin{proof}
If $X^{M} = \emptyset$, then $B = \emptyset$ and theorem is trivially true in this case. Let $X^{M} \neq \emptyset$ and $U$ be an any open set in $X^{M}$. Then $U = X^{M} - V^{*}(Ie)$ for some ideal $I$ of $R$ since $\mathcal{V^{*}}(M) = \mathcal{V'}(M) = \{V^{*}(Ie) = V(Ie): I$ is an ideal of $R\}$, by Proposition \ref{V^{*}(n) = V^{*}((n:e)e)}. By above lemma $V^{*}(Ie) = \cap_{a \in I}V^{*}(ae)$. Hence $U = X^{M} - V^{*}(Ie) = X^{M} - \cap_{a \in I}V^{*}(ae) = \cup_{a \in I}(X^{M} - V^{*}(ae)) = \cup_{a \in I} X_{a}$. Thus $B$ is a base for the Zariski topology on $X^{M}$.
\end{proof}

A topological space $T$ is called \emph{quasi-compact} if every open cover of $T$ has a finite subcover.  Every finite space is quasi-compact, and more generally every space in which there is only a finite number of open sets is quasi-compact. A subset $Y$ of a topological space $T$ is said to be quasi-compact if the subspace $Y$ is quasi-compact. By a \emph{quasi-compact open subset} of $T$ we mean an open subset of $T$ which is quasi-compact. To avoid ambiguity, we would like to mention that a compact topological space is a quasi-compact Hausdorff space.
Quasi-compact spaces are of use  mainly in applications of topology to algebraic geometry and are seldom featured in other mathematical theories, where on the contrary compact spaces play an important role in different branches of mathematics. To keep uniformity in terminology we continue with the term quasi-compact.
\begin{theorem}       \label{open base}
Let $_{R}M$ be an le-module and the natural map $\psi: X^{M} \rightarrow X^{\overline{R}}$ is surjective. Then the following statements hold:\\
(i) The open set $X_{r}$ in $X^{M}$ for each $r \in R$ is quasi-compact. In particular, the space $X^{M}$ is quasi-compact.\\
(ii) The quasi-compact open sets of $X^{M}$ are closed under finite intersection and form an open base.
\end{theorem}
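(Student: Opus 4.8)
The plan is to transport quasi-compactness from the standard base $\{D_{\overline{r}}\}$ of the prime spectrum $X^{\overline{R}}$ of the ring $\overline{R}$ up to $X^{M}$ through the surjective natural map $\psi$, leaning on the exact identities recorded in Proposition \ref{D_{r}}. For part (i) I would first recall the classical fact that each basic open set $D_{\overline{r}} = X^{\overline{R}} - V^{\overline{R}}(\overline{r}\,\overline{R})$ of the prime spectrum of a commutative ring is quasi-compact (see \cite{Atiyah}, \cite{Bourbaki}); in particular $D_{\overline{1}} = X^{\overline{R}}$ is quasi-compact. Now fix $r \in R$ and let $\{U_{\lambda}\}_{\lambda \in \Lambda}$ be an open cover of $X_{r}$; since $B = \{X_{a} : a \in R\}$ is a base for the Zariski topology on $X^{M}$ by Theorem \ref{Base for X^{M}}, I may assume without loss of generality that each $U_{\lambda} = X_{a_{\lambda}}$ for suitable $a_{\lambda} \in R$, so $X_{r} \subseteq \cup_{\lambda} X_{a_{\lambda}}$. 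Because $\psi$ is surjective, Proposition \ref{D_{r}} gives $\psi(X_{r}) = D_{\overline{r}}$ and $\psi(X_{a_{\lambda}}) = D_{\overline{a_{\lambda}}}$, so applying $\psi$ to this inclusion yields $D_{\overline{r}} \subseteq \cup_{\lambda} D_{\overline{a_{\lambda}}}$. Quasi-compactness of $D_{\overline{r}}$ then provides finitely many indices $\lambda_{1}, \dots, \lambda_{k}$ with $D_{\overline{r}} \subseteq D_{\overline{a_{\lambda_{1}}}} \cup \cdots \cup D_{\overline{a_{\lambda_{k}}}}$, and applying $\psi^{-1}$ — using $\psi^{-1}(D_{\overline{a}}) = X_{a}$ and $X_{r} = \psi^{-1}(D_{\overline{r}})$ from Proposition \ref{D_{r}}(i) — produces the finite subcover $X_{r} \subseteq X_{a_{\lambda_{1}}} \cup \cdots \cup X_{a_{\lambda_{k}}}$. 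Thus $X_{r}$ is quasi-compact, and specializing to $r = 1$ shows $X^{M} = X_{1}$ is quasi-compact.

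For part (ii), I would start from the observation that every quasi-compact open subset $U$ of $X^{M}$ is a \emph{finite} union of basic open sets: writing $U = \cup_{a \in \Lambda'} X_{a}$ via the base $B$ and using quasi-compactness of $U$, one gets $U = X_{a_{1}} \cup \cdots \cup X_{a_{m}}$. Given two quasi-compact opens $U = X_{a_{1}} \cup \cdots \cup X_{a_{m}}$ and $W = X_{b_{1}} \cup \cdots \cup X_{b_{n}}$, distributing the intersection gives $U \cap W = \cup_{i,j}(X_{a_{i}} \cap X_{b_{j}}) = \cup_{i,j} X_{a_{i} b_{j}}$ by Lemma \ref{X_{rs}}(i); this is a finite union of sets $X_{a_{i} b_{j}}$, each quasi-compact by part (i), hence itself quasi-compact, and it is clearly open. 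So the quasi-compact open sets are closed under finite intersection. That they form a base is then immediate: by part (i) every member of the base $B$ is a quasi-compact open set, so the family of all quasi-compact open subsets of $X^{M}$ contains a base and is therefore itself a base for $\tau^{*}(M)$. (If $X^{M} = \emptyset$ all assertions hold vacuously.)

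The step I expect to be the main obstacle is the one inside (i) where quasi-compactness has to travel in the ``wrong'' direction — from the target $X^{\overline{R}}$ to a subset $X_{r}$ of the source $X^{M}$ — which fails for arbitrary continuous surjections. The way around it is to push the chosen cover of $X_{r}$ forward onto a cover of $\psi(X_{r}) = D_{\overline{r}}$, shrink it to a finite subcover downstairs, and then pull that finite subcover back up; this argument is legitimate only because of the precise set-theoretic equalities $X_{r} = \psi^{-1}(D_{\overline{r}})$ and $\psi(X_{r}) = D_{\overline{r}}$ furnished by Proposition \ref{D_{r}} under the surjectivity hypothesis, rather than merely the continuity or openness of $\psi$. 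Care is needed at the pull-back step to note that $\psi^{-1}$ commutes with finite unions, so that $\psi^{-1}(D_{\overline{a_{\lambda_{1}}}} \cup \cdots \cup D_{\overline{a_{\lambda_{k}}}}) = X_{a_{\lambda_{1}}} \cup \cdots \cup X_{a_{\lambda_{k}}}$ genuinely covers $X_{r}$.
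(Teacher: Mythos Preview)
Your proposal is correct and follows essentially the same approach as the paper: push a basic cover of $X_{r}$ forward via $\psi$ to a cover of $D_{\overline{r}}$, extract a finite subcover there, and pull it back via $\psi^{-1}(D_{\overline{a}}) = X_{a}$; then for part (ii) write each quasi-compact open as a finite union of basic opens and use Lemma \ref{X_{rs}}(i) to see that the intersection is again such a finite union, hence quasi-compact. The only cosmetic difference is that in (ii) the paper verifies quasi-compactness of the finite union $\cup_{i} X_{r_{i}}$ by explicitly taking an open cover and extracting finite subcovers of each $X_{r_{i}}$, whereas you invoke directly that a finite union of quasi-compact sets is quasi-compact --- the same argument, just compressed.
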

\begin{proof}
(i) Since $B = \{X_{r}: r \in R\}$ forms a base for the zariski topology on $X^{M}$ by Theorem \ref{Base for X^{M}}, for any open cover of $X_{r}$, there is a family $\{r_{\lambda}: \lambda \in \Lambda \}$ of elements of $R$ such that $X_{r} \subseteq \cup_{\lambda \in \Lambda} X_{r_{\lambda}}$. By Proposition \ref{D_{r}}.$(ii)$, $D_{\overline{r}} = \psi(X_{r}) \subseteq \cup_{\lambda \in \Lambda} \psi(X_{r_{\lambda}}) = \cup_{\lambda \in \Lambda} D_{\overline{r_{\lambda}}}$. Since $D_{\overline{r}}$ is quasi-compact, there exists a finite subset $\Lambda'$ of $\Lambda$ such that $D_{\overline{r}} \subseteq \cup_{\lambda \in \Lambda'} D_{\overline{r_{\lambda}}}$. Hence $X_{r} = \psi^{-1}(D_{\overline{r}}) \subseteq \cup_{\lambda \in \Lambda'} X_{r_{\lambda}}$, by Proposition \ref{D_{r}}.$(i)$. Thus for each $r \in R$, $X_{r}$ is quasi-compact.\\
(ii) To prove the theorem it suffices to prove that the intersection of two quasi-compact open sets of $X^{M}$ is a quasi-compact set. Let $C = C_{1} \cap C_{2}$, where $C_{1}$, $C_{2}$ are quasi-compact open sets of $X^{M}$. Since $B = \{X_{r} : r \in R\}$ is an open base for the Zariski topology on $X^{M}$, each $C_{i}$, $i =1,2$, is a finite union of members of $B$. Then by Proposition \ref{X_{rs}}, it follows that $C$ is also a finite union of members of $B$. Let $C = \cup_{i =1}^{n} X_{r_{i}}$ and let $\Omega$ be any open cover of $C$. Then $\Omega$ also covers each $X_{r_{i}}$ which is quasi-compact by (i). Hence each $X_{r_{i}}$, has a finite subcover of $\Omega$ and so does $C$. Thus $C$ is quasi-compact. The other part of the theorem follows from the existence of the open base $B$.
\end{proof}

\section{Irreducible closed subsets and generic points}     \label{irreducible-generic}
For each subset $Y$ of $X^{M}$, we denote the closure of $Y$ in Zariski topology on $X^{M}$ by $\overline{Y}$, and meet of all elements of $Y$ by $\Im(Y)$, i.e. $\Im(Y) = \wedge_{p \in Y} p$. One can check that $\Im(Y)$ is a submodule element of $M$. For each subset $Y$ of $Spec(R)$, we denote the intersection $\cap_{P \in Y}P$ of all elements of $Y$ by $\Im^R(Y)$

\begin{proposition}     \label{closed}
Let $_{R}M$ be an le-module and $Y \subseteq X^{M}$. Then $V^{*}(\Im(Y)) = \overline{Y}$. Hence $Y$ is closed if and only if $V^{*}(\Im(Y)) = Y$.
\end{proposition}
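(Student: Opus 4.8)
The plan is to show the two inclusions $\overline{Y} \subseteq V^{*}(\Im(Y))$ and $V^{*}(\Im(Y)) \subseteq \overline{Y}$, and then read off the characterization of closed sets as an immediate corollary. For the first inclusion, I would observe that $V^{*}(\Im(Y))$ is a closed set (it lies in $\mathcal{V^{*}}(M)$ by definition, since $\Im(Y)$ is a submodule element of $M$), so it suffices to check that it contains $Y$. This is the easy direction: if $p \in Y$, then $\Im(Y) = \wedge_{q \in Y} q \leqslant p$, hence $(\Im(Y):e) \subseteq (p:e)$, so $p \in V^{*}(\Im(Y))$. Since $\overline{Y}$ is the smallest closed set containing $Y$, we get $\overline{Y} \subseteq V^{*}(\Im(Y))$.

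For the reverse inclusion, I would use the fact that $\overline{Y}$, being closed, has the form $\overline{Y} = V^{*}(n)$ for some submodule element $n$ of $M$ (using $\mathcal{V^{*}}(M) = \mathcal{V'}(M)$ from Theorem \ref{identical}, one may even take $n = Ie$ for an ideal $I$, but this is not needed). Since $Y \subseteq \overline{Y} = V^{*}(n)$, every $p \in Y$ satisfies $(n:e) \subseteq (p:e)$, and therefore $(n:e) \subseteq \cap_{p \in Y}(p:e) = (\wedge_{p\in Y} p : e) = (\Im(Y):e)$, using the fact recalled in Section \ref{preliminaries} that $(\wedge_{i} n_i : e) = \cap_i (n_i : e)$. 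Now if $p \in V^{*}(\Im(Y))$, then $(\Im(Y):e) \subseteq (p:e)$, and combining with the previous inclusion gives $(n:e) \subseteq (p:e)$, i.e. $p \in V^{*}(n) = \overline{Y}$. Hence $V^{*}(\Im(Y)) \subseteq \overline{Y}$, and the two inclusions give $V^{*}(\Im(Y)) = \overline{Y}$.

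The final assertion is then immediate: if $Y$ is closed, then $Y = \overline{Y} = V^{*}(\Im(Y))$; conversely if $V^{*}(\Im(Y)) = Y$, then $Y$ equals its own closure and so is closed. I do not anticipate a serious obstacle here; the only point requiring a little care is the representation of an arbitrary closed set in $\tau^{*}(M)$ as $V^{*}(n)$ for a single submodule element — but this is exactly what it means for $\mathcal{V^{*}}(M)$ to be the family of closed sets of the topology, and Proposition \ref{Zariski topology}(iii)(a) guarantees arbitrary intersections of such sets stay in the family, so the closure of $Y$ is indeed of this form. One should also note in passing that $\Im(Y)$ is a genuine submodule element of $M$ (a meet of submodule elements is a submodule element, as already remarked in the text), so that $V^{*}(\Im(Y))$ is legitimately a member of $\mathcal{V^{*}}(M)$.
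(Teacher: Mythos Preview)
Your proof is correct and follows essentially the same approach as the paper. The paper phrases the argument as ``$V^{*}(\Im(Y))$ is the smallest closed set containing $Y$'' (showing $V^{*}(\Im(Y)) \subseteq V^{*}(n)$ for every closed $V^{*}(n) \supseteq Y$), whereas you apply that same computation to the particular closed set $\overline{Y} = V^{*}(n)$ to get the reverse inclusion directly; the key step $(n:e) \subseteq \cap_{p \in Y}(p:e) = (\Im(Y):e)$ is identical in both.
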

\begin{proof}
To prove the result it is sufficient to prove that $V^{*}(\Im(Y))$ is the smallest closed subset of $X^{M}$ containing $Y$. Now for all $p \in Y$, $\Im(Y) = \wedge_{p \in Y} p \leqslant p$ implies that $(\Im(Y):e) \subseteq (p:e)$, i.e., $p \in V^{*}(\Im(Y))$. Hence $Y \subseteq V^{*}(\Im(Y))$. Now let $V^{*}(n)$ be any closed subset of $X^{M}$ such that $Y \subseteq V^{*}(n)$. Then for every $p \in Y$, $(n:e) \subseteq (p:e)$ and so $(n:e) \subseteq \cap_{p \in Y} (p:e) = (\wedge_{p \in Y} p:e) = (\Im(Y):e)$. Also let $q \in V^{*}(\Im(Y))$. Then $(n:e) \subseteq (\Im(Y):e) \subseteq (q:e)$ implies that $q \in V^{*}(n)$. Thus $V^{*}(\Im(Y)) \subseteq V^{*}(n)$. Therefore $V^{*}(\Im(Y)) = \overline{Y}$.
\end{proof}

For an le-module $_{R}M$, we denote $\Phi = \{(p:e)| p \in X^{M} \}$. Then $\Phi \subseteq X^{R}$, by Lemma \ref{prime ideal}. We say $P$ is a maximal element of $\Phi$ if for any $Q \in \Phi$, $P \subseteq Q$ implies that $P = Q$. Recall that a topological space is a $T_{1}$-space if and only if every singleton subset is closed.

\begin{proposition}       \label{closure}
Let $_{R}M$ be an le-module and $p \in X^{M}$. Then
\begin{enumerate}
\item[(i)] $\overline{\{p\}} = V^{*}(p)$;
\item[(ii)] For any $q \in X^{M}$, $q \in \overline{\{p\}}$ if and only if $(p:e) \subseteq (q:e)$ if and only if $V^{*}(q) \subseteq V^{*}(p)$;
\item[(iii)] The set $\{p\}$ is closed in $X^{M}$ if and only if \\
$(a)$ $P = (p:e)$ is a maximal element of $\Phi$, and \\
$(b)$ $Spec_{P}(M) = \{p\}$, i.e, $|Spec_{P}(M)| = 1$;
\item[(iv)] $Spec(M)$ is a $T_{1}$-space if and only if \\
(a) $P = (p:e)$ is a maximal element of $\Phi$ for every $p \in X^{M}$, and \\
(b) $|Spec_{P}(M)| \leqslant 1$ for every $P \in Spec(R)$.
\end{enumerate}
\end{proposition}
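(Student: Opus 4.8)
The plan is to reduce everything to Proposition \ref{closed} and Lemma \ref{prime ideal}, unwinding the definition of $V^{*}$. For (i), I would apply Proposition \ref{closed} to the singleton $Y=\{p\}$: since $\Im(\{p\})=\wedge_{q\in\{p\}}q=p$, it gives $\overline{\{p\}}=V^{*}(\Im(\{p\}))=V^{*}(p)$ at once. For (ii), combining (i) with the definition of $V^{*}$ yields $q\in\overline{\{p\}}=V^{*}(p)$ if and only if $(p:e)\subseteq(q:e)$; and for the last equivalence, if $(p:e)\subseteq(q:e)$ then every $p'\in V^{*}(q)$ has $(p:e)\subseteq(q:e)\subseteq(p':e)$, so $p'\in V^{*}(p)$, while conversely $q\in V^{*}(q)$ always holds, so $V^{*}(q)\subseteq V^{*}(p)$ forces $q\in V^{*}(p)$, i.e. $(p:e)\subseteq(q:e)$.

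For (iii), note that $\{p\}$ is closed exactly when $\overline{\{p\}}=\{p\}$, hence by (i) exactly when $V^{*}(p)=\{p\}$. Assuming $V^{*}(p)=\{p\}$: if $Q\in\Phi$ with $P=(p:e)\subseteq Q$, write $Q=(q:e)$ with $q\in X^{M}$; then $q\in V^{*}(p)=\{p\}$, so $Q=P$, giving (a); and if $q\in Spec_{P}(M)$ then $(q:e)=P=(p:e)$, so again $q\in V^{*}(p)=\{p\}$, giving (b). Conversely, assuming (a) and (b), take any $q\in V^{*}(p)$; then $(p:e)\subseteq(q:e)$, and $(q:e)\in\Phi$ by Lemma \ref{prime ideal}, so maximality of $P$ in $\Phi$ forces $(q:e)=P$, whence $q\in Spec_{P}(M)=\{p\}$; thus $V^{*}(p)=\{p\}$ and $\{p\}$ is closed.

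For (iv), $Spec(M)$ is a $T_{1}$-space precisely when $\{p\}$ is closed for every $p\in X^{M}$, i.e. by (iii) when, for every $p$, both $(p:e)$ is a maximal element of $\Phi$ and $Spec_{(p:e)}(M)=\{p\}$. The first clause is exactly (a) of the statement. For the second, I would argue that ``$Spec_{(p:e)}(M)=\{p\}$ for every $p\in X^{M}$'' is equivalent to ``$|Spec_{P}(M)|\leqslant 1$ for every $P\in Spec(R)$'': if the former holds and $P\in Spec(R)$, then either $Spec_{P}(M)=\emptyset$, or, choosing $p\in Spec_{P}(M)$, we get $Spec_{P}(M)=Spec_{(p:e)}(M)=\{p\}$; conversely, if $|Spec_{P}(M)|\leqslant 1$ for all $P$, then since $p\in Spec_{(p:e)}(M)$ we must have $Spec_{(p:e)}(M)=\{p\}$. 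The argument has no serious obstacle; the only points needing care are invoking Lemma \ref{prime ideal} in (iii) so that $(q:e)$ genuinely lies in $\Phi$ and the maximality hypothesis applies, and, in (iv), the bookkeeping that translates a condition quantified over $p\in X^{M}$ into one quantified over all primes $P$ of $R$, including those outside $\Phi$, for which $Spec_{P}(M)$ is empty.
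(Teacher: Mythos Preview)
Your proof is correct and follows essentially the same route as the paper: reducing (i) to Proposition \ref{closed} with $Y=\{p\}$, reading (ii) off the definition of $V^{*}$, and in (iii)--(iv) translating closedness of $\{p\}$ into $V^{*}(p)=\{p\}$ and then into the maximality and cardinality conditions on $\Phi$ and $Spec_{P}(M)$. The only cosmetic difference is that in (iii) you cite Lemma \ref{prime ideal} for $(q:e)\in\Phi$, whereas this is immediate from the definition $\Phi=\{(p:e)\mid p\in X^{M}\}$ once $q\in X^{M}$.
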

\begin{proof}
(i) It follows from Proposition \ref{closed} by taking $Y = \{p\}$. \\
(ii) It is an obvious result of (i). \\
(iii) Let $\{p\}$ be closed in $X^{M}$. Then by (i), $\{p\} = V^{*}(p)$. To show $P = (p:e)$ is a maximal element of $\Phi$, let $Q \in \Phi$ be such that $P \subseteq Q$. Since $Q \in \Phi$ there is a prime submodule element $q$ of $X^{M}$ such that $(q:e) = Q$. Then $(p:e) \subseteq (q:e)$ which implies that $q \in V^{*}(p) = \{p\}$. Thus $p = q$ and so $P = Q$. For (ii) suppose that $q$ be any element of Spec$_{P}(M)$. Then $(q:e) = P =(p:e)$ implies that $q \in V^{*}(p) = \{p\}$ and hence $q = p$.

Conversely, we assume that the conditions (a) and (b) hold. Let $q \in V^{*}(p)$. Then $(p:e) \subseteq (q:e)$ and so $P = (p:e) = (q:e)$, by (a). Now (b) implies that $p = q$, i.e, $V^{*}(p) \subseteq \{p\}$. Also $\{p\} \subseteq V^{*}(p)$. Thus $\{p\} = V^{*}(p)$, i.e, $\{p\}$ is closed in $X^{M}$ by (i). \\
(iv) Note that (b) is equivalent to that $|Spec_{P}(M)| = 1$ for every $P \in \Phi$. Thus, by (iii), it follows that $\{p\}$  is closed in $X^{M}$ for every $p \in X^{M}$. Hence $X^{M}$ is a $T_{1}$-space.
\end{proof}

A topological space $T$ is called \emph{irreducible} if for every pair of closed subsets $T_{1}, T_{2}$ of $T$, \;\; $T = T_{1} \cup T_{2}$ implies $T = T_{1}$ or $T = T_{2}$. A subset $Y$ of $T$ is \emph{irreducible} if it is irreducible as a subspace of $T$. By an \emph{irreducible component} of a topological space $T$ we mean a maximal irreducible subset of $T$. Also if a subset $Y$ of a topological space $T$ is irreducible, then its closure $\overline{Y}$ is so. Since every singleton subset of $X^{M}$ is irreducible, its closure is also irreducible. Now by Proposition \ref{closure}, we have the following result:
\begin{corollary}      \label{irreducible closed subset}
$V^{*}(p)$ is an irreducible closed subset of $X^{M}$ for every prime submodule element $p$ of an le-module $_{R}M$.
\end{corollary}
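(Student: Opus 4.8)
The plan is to deduce the corollary directly from Proposition \ref{closure}(i), together with the two general topological facts recalled in the paragraph immediately preceding the statement: every singleton subset of a topological space is irreducible, and the closure of an irreducible subset is again irreducible. So essentially all the substantive work has already been done, and what remains is to assemble the pieces.

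First I would note that $V^{*}(p)$ is a closed subset of $X^{M}$ in the Zariski topology $\tau^{*}(M)$ simply by definition: a prime submodule element $p$ is in particular a submodule element of $M$, so $V^{*}(p) \in \mathcal{V^{*}}(M)$, and $\mathcal{V^{*}}(M)$ is the collection of closed sets of $\tau^{*}(M)$. Next, since $p \in X^{M}$, the singleton $\{p\}$ is an irreducible subset of $X^{M}$ (a one-point set cannot be written as the union of two proper closed subsets), and hence its closure $\overline{\{p\}}$ is irreducible as well. By Proposition \ref{closure}(i) we have $\overline{\{p\}} = V^{*}(p)$, so $V^{*}(p)$ is irreducible. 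Combining the two observations yields that $V^{*}(p)$ is an irreducible closed subset of $X^{M}$, as claimed.

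There is no genuine obstacle here; the only point worth keeping in mind is that $V^{*}(p)$ is nonempty — indeed $p \in V^{*}(p)$ because $(p:e) \subseteq (p:e)$ — so that asserting its irreducibility is not vacuous. The identification $\overline{\{p\}} = V^{*}(p)$, which is the one place where the structure of $\tau^{*}(M)$ is actually used, was established in Proposition \ref{closure}, so the corollary is immediate.
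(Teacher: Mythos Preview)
Your proof is correct and follows exactly the approach the paper takes: the corollary is derived immediately from Proposition \ref{closure}(i) together with the two topological facts stated in the preceding paragraph (singletons are irreducible and closures of irreducible sets are irreducible). There is nothing to add.
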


It is well known that a subset $Y$ of Spec$(R)$ for any ring $R$ is irreducible if and only if $\Im^R(Y)$ is a prime ideal of $R$ \cite{Bourbaki}. Let $M$ be a left $R$-module. Then a subset $Y$ of Spec$(M)$ is irreducible if $\Im^M(Y) = \cap_{P \in Y}P$ is a prime submodule of $M$, but the converse is not true in general \cite{CPLu1}. In the following result we show that the situation in an le-module $_{R}M$ is similar to the modules over a ring. Interestingly, the converse of this result in an le-module $_{R}M$ is directly associated with the ring $R$.

\begin{proposition}      \label{irreducible-prime}
Let $_{R}M$ be an le-module and $Y \subseteq X^{M}$. If $\Im(Y)$ is a prime submodule element of $M$ then $Y$ is irreducible. Conversely, if $Y$ is irreducible then $\Psi = \{(p:e)| p \in Y \}$ is an irreducible subset of Spec$(R)$, i.e, $\Im^R(\Psi) = (\Im(Y):e)$ is a prime ideal of $R$.
\end{proposition}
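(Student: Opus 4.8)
The plan is to split the statement into its two implications and handle each by translating the topological hypothesis into an arithmetic one about $(p:e)$ via the results already established. For the forward direction, suppose $\Im(Y)$ is a prime submodule element. By Proposition \ref{closed} we have $\overline{Y} = V^{*}(\Im(Y))$, and since $\Im(Y)$ is prime, Corollary \ref{irreducible closed subset} tells us that $V^{*}(\Im(Y))$ is irreducible. A subset of a topological space is irreducible if and only if its closure is irreducible, so $Y$ itself is irreducible. This direction is essentially immediate once the right prior results are invoked.

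For the converse, assume $Y$ is irreducible; I want to show $\Im^{R}(\Psi) = (\Im(Y):e)$ is a prime ideal of $R$, where $\Psi = \{(p:e) \mid p \in Y\}$. First I would verify the identity $\Im^{R}(\Psi) = (\Im(Y):e)$: this is exactly $\cap_{p\in Y}(p:e) = (\wedge_{p\in Y}p : e)$, which is the general fact recorded in the preliminaries that $(\wedge_{i} n_{i}:e) = \cap_{i}(n_{i}:e)$. So it suffices to prove $\cap_{p\in Y}(p:e)$ is prime. Using the known characterization that a subset of $\mathrm{Spec}(R)$ is irreducible iff the intersection of its members is a prime ideal, it is enough to show $\Psi$ is an irreducible subspace of $\mathrm{Spec}(R)$. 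The natural tool here is the natural map $\psi \colon X^{M} \to X^{\overline{R}}$ sending $p \mapsto \overline{(p:e)}$, which is continuous by Proposition \ref{continuous}; since continuous images of irreducible sets are irreducible, $\psi(Y)$ is irreducible in $X^{\overline{R}}$. One then pulls this back along the canonical homeomorphism $X^{\overline{R}} \cong V^{R}(\mathrm{Ann}(M)) \subseteq X^{R}$ to conclude that $\{(p:e)\mid p\in Y\} = \Psi$ is irreducible in $\mathrm{Spec}(R)$.

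Alternatively, and perhaps more cleanly without passing through $\overline{R}$, I would argue directly: to show $\Psi$ is irreducible in $\mathrm{Spec}(R)$, suppose $\Psi \subseteq V^{R}(I) \cup V^{R}(J)$ for ideals $I,J$ of $R$. For each $p\in Y$, $(p:e)$ being a prime ideal containing $I$ or $J$ means $p \in V^{*}(Ie)$ or $p \in V^{*}(Je)$ (using that $I\subseteq (p:e) \iff Ie\leqslant (p:e)e \leqslant p$, hence $(Ie:e)\subseteq(p:e)$). Thus $Y \subseteq V^{*}(Ie)\cup V^{*}(Je) = V^{*}((I\cap J)e)$ by Lemma \ref{union intersection}(ii). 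Since these are closed sets and $Y$ is irreducible, $\overline{Y} \subseteq V^{*}(Ie)$ or $\overline{Y}\subseteq V^{*}(Je)$, so $Y$ lies in one of them, giving $\Psi \subseteq V^{R}(I)$ or $\Psi\subseteq V^{R}(J)$. Emptiness of $Y$ (hence of $\Psi$) must be excluded at the outset for the irreducibility convention. The main obstacle I anticipate is purely bookkeeping: making sure the passage between ``$p\in V^{*}(Ie)$'' and ``$I \subseteq (p:e)$'' is stated with the correct inclusions (it is an equivalence only after applying the operations $I\mapsto Ie$ and $n\mapsto(n:e)$, not a literal equality of ideals), and confirming that irreducibility of $Y$ as a subspace transfers to the closed cover in $X^{M}$ rather than just in $Y$ — which is fine since $V^{*}(Ie)\cap Y$ and $V^{*}(Je)\cap Y$ are the relevant closed subsets of the subspace $Y$.
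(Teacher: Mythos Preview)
Your argument is correct, and your primary line---the forward direction via Proposition~\ref{closed} and Corollary~\ref{irreducible closed subset}, and the converse via the continuity of $\psi$ so that $\psi(Y)$ is irreducible in $X^{\overline{R}}$ and hence $\overline{(\Im(Y):e)}$ is prime---is exactly the paper's proof. Your alternative direct argument for the converse (pulling a closed cover $V^{R}(I)\cup V^{R}(J)$ of $\Psi$ back to $V^{*}(Ie)\cup V^{*}(Je)$ in $X^{M}$ and using irreducibility of $Y$) is also valid and has the mild advantage of bypassing $\overline{R}$ entirely, though it requires a little more bookkeeping than the one-line appeal to continuity that the paper uses.
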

\begin{proof}
Let $\Im(Y)$ be a prime submodule element of $M$. Then by Corollary \ref{irreducible closed subset}, $V^{*}(\Im(Y)) = \overline{Y}$ is irreducible and hence $Y$ is irreducible. Conversely, suppose that $Y$ is irreducible. Since $\psi$ is continuous by Proposition \ref{continuous}, the image $\psi(Y)$ of $Y$ under the natural map $\psi$ of $X^{M}$ is an irreducible subset of $X^{\overline{R}}$. Hence $\Im^{\overline{R}}(\psi(Y)) = \overline{(\Im(Y):e)}$ is a prime ideal of $X^{\overline{R}}$. Thus $\Im^R(\Psi) = (\Im(Y):e)$ is a prime ideal of $R$ so that $\Psi$ is an irreducible subset of Spec$(R)$.
\end{proof}

Also we have some other characterization of the irreducible subsets of $X^{M}$.
\begin{proposition}
Let $_{R}M$ be an le-module. Then the following statements hold:
\begin{enumerate}
\item[(i)] If $Y = \{p_{i}|i \in I \}$ is a family of prime submodule elements of $M$ which is totally ordered by ``$\leqslant$", then $Y$ is irreducible in $X^{M}$.
\item[(ii)] If Spec$_{P}(M) \neq \emptyset$ for some $P \in Spec(R)$, then \\
(a) Spec$_{P}(M)$ is irreducible, and \\
(b) Spec$_{P}(M)$ is an irreducible closed subset of $X^{M}$ if $P$ is a maximal ideal of $R$.
\item[(iii)] Let $Y \subseteq X^{M}$ be such that $(\Im(Y):e) = P$ is a prime ideal of $R$. Then $Y$ is irreducible if Spec$_{P}(M) \neq \emptyset$.
\end{enumerate}
\end{proposition}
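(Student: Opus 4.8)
The plan is to handle the three parts in the order (iii), then (ii), then (i): part (iii) is the essential topological fact, part (ii)(a) is the special case $Y=Spec_{P}(M)$ of (iii), part (ii)(b) adds a short argument from maximality, and part (i) is proved separately by showing $\Im(Y)$ is a prime submodule element and applying Proposition \ref{irreducible-prime}. For (iii) I would argue directly from the closed-set description of $\tau^{*}(M)$: every closed subset of $X^{M}$ is some $V^{*}(n)$, and by Proposition \ref{Zariski topology}(iv)(a) one has $V^{*}(n)\cup V^{*}(l)=V^{*}(n\wedge l)$, so $Y$ fails to be irreducible exactly when there are submodule elements $n,l$ with $Y\subseteq V^{*}(n\wedge l)$ but $Y\not\subseteq V^{*}(n)$ and $Y\not\subseteq V^{*}(l)$. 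Assuming $Y\subseteq V^{*}(n\wedge l)$, we get $(n:e)\cap(l:e)=(n\wedge l:e)\subseteq(p:e)$ for every $p\in Y$, hence $(n:e)\cap(l:e)\subseteq\cap_{p\in Y}(p:e)=(\Im(Y):e)=P$; as $P$ is prime, say $(n:e)\subseteq P$, and then $(n:e)\subseteq P=(\Im(Y):e)\subseteq(p:e)$ for all $p\in Y$, i.e. $Y\subseteq V^{*}(n)$, so $Y$ is irreducible. (Note $Y\neq\emptyset$: otherwise $\Im(Y)=e$ and $(\Im(Y):e)=R\neq P$; this is also what $Spec_{P}(M)\neq\emptyset$ guarantees.)

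For (ii)(a), put $Y=Spec_{P}(M)$, nonempty by hypothesis; since $(p:e)=P$ for every $p\in Y$, $(\Im(Y):e)=\cap_{p\in Y}(p:e)=P$ is prime, so (ii)(a) is exactly (iii) applied to this $Y$. For (ii)(b), assume in addition that $P$ is maximal. By Proposition \ref{closed}, $\overline{Spec_{P}(M)}=V^{*}(\Im(Spec_{P}(M)))=\{q\in X^{M}:P\subseteq(q:e)\}$, using $(\Im(Spec_{P}(M)):e)=P$; but $(q:e)$ is a proper prime ideal containing the maximal ideal $P$, forcing $(q:e)=P$, so $\overline{Spec_{P}(M)}=Spec_{P}(M)$. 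Hence $Spec_{P}(M)$ is closed, and together with (ii)(a) it is an irreducible closed subset of $X^{M}$.

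For (i), I would verify that $\Im(Y)=\wedge_{i\in I}p_{i}$ is a prime submodule element: it is a submodule element (as recorded in the text) and it is proper since $\Im(Y)\leqslant p_{i}<e$ for any fixed $i$. Suppose $r\in R$ and $m\in M$ satisfy $rm\leqslant\Im(Y)$ and $m\not\leqslant\Im(Y)$; pick $i_{0}$ with $m\not\leqslant p_{i_{0}}$, so primeness of $p_{i_{0}}$ gives $r\in(p_{i_{0}}:e)$. For an arbitrary $i\in I$, total ordering yields either $p_{i_{0}}\leqslant p_{i}$, whence $r\in(p_{i_{0}}:e)\subseteq(p_{i}:e)$, or $p_{i}\leqslant p_{i_{0}}$, whence $m\not\leqslant p_{i}$ (otherwise $m\leqslant p_{i}\leqslant p_{i_{0}}$) and primeness of $p_{i}$ gives $r\in(p_{i}:e)$. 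Thus $re\leqslant p_{i}$ for all $i$, so $re\leqslant\Im(Y)$ and $r\in(\Im(Y):e)$; Proposition \ref{irreducible-prime} then gives that $Y$ is irreducible. The one genuinely delicate point is this chain bookkeeping in (i) — applying $p_{i}\leqslant p_{j}\Rightarrow(p_{i}:e)\subseteq(p_{j}:e)$ in the correct direction in each case, and observing that $m\not\leqslant p_{i_{0}}$ propagates down but not up the chain; everything else reduces to Propositions \ref{Zariski topology}, \ref{closed} and \ref{irreducible-prime} together with the elementary facts about $(\,\cdot:e)$ recalled in Section \ref{preliminaries}.
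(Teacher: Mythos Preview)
Your proof is correct, but your route differs from the paper's in parts (ii) and (iii). For (iii) the paper actually \emph{uses} the hypothesis $Spec_{P}(M)\neq\emptyset$: it picks $p\in Spec_{P}(M)$, observes $(p:e)=P=(\Im(Y):e)$, hence $\overline{Y}=V^{*}(\Im(Y))=V^{*}(p)$, and invokes Corollary~\ref{irreducible closed subset} to conclude $\overline{Y}$ (and so $Y$) is irreducible. Your argument bypasses this entirely: working directly with the closed sets $V^{*}(n)$ and the primeness of $P=(\Im(Y):e)$, you show $Y\subseteq V^{*}(n)\cup V^{*}(l)$ forces $Y\subseteq V^{*}(n)$ or $Y\subseteq V^{*}(l)$, and you never touch $Spec_{P}(M)$. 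This is more elementary and in fact establishes a strictly stronger statement than the paper's (iii), since the extra hypothesis is unnecessary. For (ii)(a) the paper does not reduce to (iii) but instead proves directly that $\Im(Spec_{P}(M))$ is a prime submodule element and then appeals to Proposition~\ref{irreducible-prime}; your reduction to (iii) via $(\Im(Spec_{P}(M)):e)=\cap_{p}(p:e)=P$ is cleaner. For (ii)(b) the paper shows $Spec_{P}(M)=V^{*}(Pe)$ by a two-sided inclusion using maximality of $P$, whereas you compute $\overline{Spec_{P}(M)}$ via Proposition~\ref{closed} and use maximality to collapse it back to $Spec_{P}(M)$; both are short and essentially equivalent. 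Part (i) matches the paper's argument (showing $\Im(Y)$ is prime by the chain case-split), only phrased in the contrapositive direction.
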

\begin{proof}
(i) It is suffices to show that $\Im(Y) = \wedge_{i \in I} p_{i}$ is a prime submodule element. Let $r \in R$ and $n \in M$ be such that $n \nleq \Im(Y)$ and $r \notin (\Im(Y):e) = (\wedge_{i \in I} p_{i}:e) = \cap_{i \in I}(p_{i}:e)$. Then there exists $l$ and $k$ such that $n \nleq p_{l}$ and $r \notin (p_{k}:e)$. Since $Y$ is totally ordered, either $p_{l} \leqslant p_{k}$ or $p_{k} \leqslant p_{l}$. Let $p_{l} \leqslant p_{k}$. Then $r \notin (p_{l}:e)$ and $n \nleq p_{l}$ implies that $rn \nleq p_{l}$ since $p_{l}$ is a prime submodule element. Thus $rn \nleq \wedge_{i \in I} p_{i} = \Im(Y)$. Hence $\Im(Y)$ is a prime submodule element.\\
(ii) Let Spec$_{P}(M) \neq \emptyset$ for some $P \in Spec(R)$. Then $\Im(Spec_{P}(M))$ is a proper submodule element of $M$.\\
(a) Assume that $r \in R$ and $n \in M$ be such that $rn \leqslant \Im(Spec_{P}(M))$ and $n \nleqslant \Im(Spec_{P}(M))$. Since $n \nleqslant \Im(Spec_{P}(M))$, there exists $p \in X^{M}$ with $(p:e) = P$ such that $n \nleqslant p$. Now $rn \leqslant \Im(Spec_{P}(M)) \leqslant p$ and $n \nleqslant p$ implies that $re \leqslant p$, since $p$ is a prime submodule element. Thus $r \in (p:e) = P = (\Im(Spec_{P}(M)):e)$. Therefore $\Im(Spec_{P}(M))$ is a prime submodule element and hence Spec$_{P}(M)$ is irreducible.\\
(b) We prove that $Spec_{P}(M) = V^{*}(Pe)$ so that Spec$_{P}(M)$ is closed. Let $q \in Spec_{P}(M)$. then $(q:e) = P \subseteq (Pe:e)$. Since $P$ is a maximal ideal of $R$, $(Pe:e) = P = (q:e)$, and so $q \in V^{*}(Pe)$. Thus $Spec_{P}(M) \subseteq V^{*}(Pe)$. Also let $q \in V^{*}(Pe)$. Then $P \subseteq (Pe:e) \subseteq (q:e)$ which implies that $(q:e) = P$, since $P$ is a maximal ideal. Thus $q \in Spec_{P}(M)$ and hence $V^{*}(Pe) \subseteq Spec_{P}(M)$. Therefore $Spec_{P}(M) = V^{*}(Pe)$. \\
(iii) Let $p$ be any element of Spec$_{P}(M)$. Then $(p:e) = P = (\Im(Y):e)$ which implies that $V^{*}(p) = V^{*}(\Im(Y)) = \overline{{Y}}$, by Proposition \ref{closed}. Thus $\overline{{Y}}$ is irreducible and hence $Y$ is irreducible.
\end{proof}

Let $Y$ be a closed subset of a topological space $T$. An element $y \in Y$ is called a \emph{generic point} of $Y$ if $Y = \overline{\{y\}}$. Proposition \ref{closure} shows that every prime submodule element $p$ of $M$ is a generic point of the irreducible closed subset $V^{*}(p)$ in $X^{M}$. Now we prove that every irreducible closed subset of $X^{M}$ has a generic point.
\begin{theorem}     \label{bijection}  \label{generic point}
Let $_{R}M$ be an le-module and the natural map $\psi: X^{M}\rightarrow X^{\overline{R}}$ be surjective. Then the following statements hold.
\begin{enumerate}
\item[(i)]
Then $Y \subseteq X^M$ is an irreducible closed subset of $X^{M}$ if and only if $Y = V^{*}(p)$ for some $p \in X^{M}$. Every irreducible closed subset of $X^{M}$ has a generic point.
\item[(ii)]
The correspondence $p \mapsto V^{*}(p)$ is a surjection of $X^{M}$ onto the set of irreducible closed subsets of $X^{M}$.
\item[(iii)]
The correspondence $V^{*}(p) \mapsto \overline{(p:e)}$ is a bijection of the set of irreducible components of $X^{M}$ onto the set of minimal prime ideals of $\overline{R}$.
\end{enumerate}
\end{theorem}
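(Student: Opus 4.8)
The plan is to prove \textbf{(i)} first, obtain \textbf{(ii)} as an immediate corollary, and then deduce \textbf{(iii)} by tracking the order-reversing dictionary $V^{*}(p)\leftrightarrow (p:e)$ between irreducible closed subsets of $X^{M}$ and primes of $\overline{R}$.

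For \textbf{(i)}, one implication is already Corollary \ref{irreducible closed subset}: each $V^{*}(p)$ is irreducible and closed. For the converse, let $Y$ be an irreducible closed subset. By Proposition \ref{closed} we have $Y=\overline{Y}=V^{*}(\Im(Y))$, and since $Y$ is irreducible, Proposition \ref{irreducible-prime} tells us that $(\Im(Y):e)$ is a prime ideal $P$ of $R$. As $0_{M}\leqslant\Im(Y)$ forces $Ann(M)=(0_{M}:e)\subseteq(\Im(Y):e)=P$, the surjectivity of $\psi$ provides some $p\in X^{M}$ with $(p:e)=P$. Because $V^{*}$ depends only on the ideal $(\cdot:e)$, the equality $(p:e)=(\Im(Y):e)$ gives $V^{*}(p)=V^{*}(\Im(Y))=Y$; and $V^{*}(p)=\overline{\{p\}}$ by Proposition \ref{closure}(i), so $p$ is a generic point of $Y$. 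This proves \textbf{(i)}. Part \textbf{(ii)} is then immediate: the assignment $p\mapsto V^{*}(p)$ takes values in the set of irreducible closed subsets by Corollary \ref{irreducible closed subset}, and by \textbf{(i)} it is onto that set.

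For \textbf{(iii)}, the first step is to record the comparison rule $V^{*}(q)\subseteq V^{*}(p)\iff(p:e)\subseteq(q:e)$: ``$\Leftarrow$'' is monotonicity of $V^{*}$, and ``$\Rightarrow$'' follows since $p\in V^{*}(p)$ forces $p\in V^{*}(q)$, i.e. $(q:e)\subseteq(p:e)$. Hence an irreducible closed set $V^{*}(p)$ is \emph{maximal} among irreducible closed subsets precisely when $(p:e)$ is a minimal element of $\Phi=\{(q:e)\mid q\in X^{M}\}$; and since irreducible components are closed and, by \textbf{(i)}, are exactly the maximal members of the family $\{V^{*}(p)\}$, this says the irreducible components are exactly the $V^{*}(p)$ with $(p:e)$ minimal in $\Phi$. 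Now use surjectivity of $\psi$ once more: it gives $\Phi=\{P\in Spec(R)\mid Ann(M)\subseteq P\}$, whose minimal elements correspond under the canonical map $\phi$ bijectively to the minimal primes of $\overline{R}=R/Ann(M)$. Finally I would verify that $V^{*}(p)\mapsto\overline{(p:e)}$ is well defined and bijective on these sets: $V^{*}(p)=V^{*}(q)$ $\iff$ $(p:e)=(q:e)$ $\iff$ $\overline{(p:e)}=\overline{(q:e)}$ (both ideals contain $Ann(M)$), yielding well-definedness and injectivity; and given a minimal prime $\overline{P}$ of $\overline{R}$, choosing any $p\in Spec_{P}(M)$ (nonempty because $\psi$ is surjective) makes $V^{*}(p)$ an irreducible component by the minimality criterion, so the map is onto.

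The only genuine work is the converse direction of \textbf{(i)} — exhibiting a generic point — and its engine is precisely the standing hypothesis that $\psi$ is surjective (equivalently $Spec_{P}(M)\neq\emptyset$ for every prime $P\supseteq Ann(M)$), combined with Proposition \ref{irreducible-prime}, which guarantees that $(\Im(Y):e)$ is prime so that such a $P$ is available. Once that is in place, \textbf{(ii)} and \textbf{(iii)} are bookkeeping with the order-reversing correspondence between $\tau^{*}(M)$-closed irreducibles and $Spec(\overline{R})$.
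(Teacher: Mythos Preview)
Your argument follows the paper's essentially step for step: for (i) you use Proposition~\ref{closed} to write $Y=V^{*}(\Im(Y))$, invoke Proposition~\ref{irreducible-prime} to get that $P=(\Im(Y):e)$ is prime, and then surjectivity of $\psi$ to produce $p$ with $(p:e)=P$, exactly as the paper does; (ii) is immediate in both; and for (iii) both you and the paper exploit the order-reversing dictionary $V^{*}(p)\supseteq V^{*}(q)\Leftrightarrow (p:e)\subseteq(q:e)$ together with surjectivity of $\psi$.

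One small slip to fix in (iii): in your justification of ``$\Rightarrow$'' for $V^{*}(q)\subseteq V^{*}(p)\Rightarrow(p:e)\subseteq(q:e)$, you wrote ``$p\in V^{*}(p)$ forces $p\in V^{*}(q)$, i.e.\ $(q:e)\subseteq(p:e)$'', which runs the inclusion the wrong way and reaches the opposite conclusion. The correct line is $q\in V^{*}(q)\subseteq V^{*}(p)$, hence $(p:e)\subseteq(q:e)$; alternatively, just cite Proposition~\ref{closure}(ii), where this equivalence is already recorded.
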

\begin{proof}
(i) Let $Y$ be an irreducible closed subset of $X^{M}$. Then $Y = V^{*}(n)$ for some submodule element $n$ of $M$. By Proposition \ref{irreducible-prime}, we have $(\Im(V^{*}(n)):e) = (\Im(Y):e) = P$ is a prime ideal of $R$. Then $P/Ann(M) \in X^{\overline{R}}$. Since $\psi$ is surjective, there exists a prime submodule element $p$ of $X^{M}$ such that $\psi(p) = P/Ann(M)$, i.e, $(p:e)/Ann(M) = P/Ann(M)$ and hence $(p:e) = P = (\Im(V^{*}(n)):e)$. Therefore $V^{*}(\Im(V^{*}(n))) = V^{*}(p)$. Since $V^{*}(n)$ is closed, $V^{*}(\Im(V^{*}(n))) = V^{*}(p)$ implies $V^{*}(n) = V^{*}(p)$, by Proposition \ref{closed}. Thus $Y = V^{*}(n) = V^{*}(p)$.

Converse part follows from Corollary \ref{irreducible closed subset}.

Now it follows from $V^{*}(p) = \overline{\{p\}}$ that every irreducible closed subset of $X^{M}$ has a generic point.\\
(ii) Follows from (i). \\
(iii) First assume that $V^{*}(p)$ is an irreducible component of $X^{M}$. Then $V^{*}(p)$ is maximal in the set $\{V^{*}(q): q \in X^{M} \}$. Clearly $\overline{(p:e)}$ is a prime ideal of $\overline{R}$. To show $\overline{(p:e)}$ is minimal let $\overline{P}$ be a prime ideal of $\overline{R}$ such that $\overline{P} \subseteq \overline{(p:e)}$. Then $P \subseteq (p:e)$. Since $\psi$ is surjective there exists $q \in X^{M}$ such that $\psi(q) = \overline{P}$, i.e, $\overline{(q:e)} = \overline{P}$. Therefore $(q:e) = P \subseteq (p:e)$ and hence $V^{*}(p) \subseteq V^{*}(q)$. Since $V^{*}(p)$ is maximal in the set $\{V^{*}(q): q \in X^{M} \}$, $V^{*}(p) = V^{*}(q)$. Thus $(p:e) = (q:e) = P$ and so $\overline{P} = \overline{(p:e)}$.

Next let $\overline{P}$ be a minimal prime ideal of $\overline{R}$. Since $\psi$ is surjective there exists $p \in X^{M}$ such that $(p:e) = P$. From Corollary \ref{irreducible closed subset}, $V^{*}(p)$ is an irreducible subset of $X^{M}$. Now let $q \in X^{M}$ be such that $V^{*}(p) \subseteq V^{*}(q)$. Then $(q:e) \subseteq (p:e) = P$ which implies that $\overline{(q:e)} \subseteq \overline{P}$. Since $\overline{P}$ is minimal, $\overline{(q:e)} = \overline{P}$. Thus $(q:e) = P = (p:e)$ and so $V^{*}(p) = V^{*}(q)$. Therefore $V^{*}(p)$ is an irreducible component of $X^{M}$. This completes the proof.
\end{proof}

\section{$X^{M}$ as a spectral space}   \label{section spectral}
A topological space $T$ is called \emph{spectral} if it is $T_{0}$, quasi-compact, the quasi-compact open subsets of $T$ are closed under finite intersection and form an open basis, and each irreducible closed subset of $T$ has a generic point. Importance of spectral topological space is that a topological space $T$ is homeomorphic to Spec$(R)$ for some commutative ring $R$ if and only if $T$ is spectral. Any closed subset of a spectral topological space with the induced topology is spectral.

From Theorem \ref{open base} and Theorem \ref{generic point}, it follows that $X^M$ satisfies all the conditions to be a spectral space except being $T_0$.

Now we prove some equivalent characterizations for $X^M$ to be spectral.
\begin{theorem}   \label{T_{0} space} \label{spectral}
Let $_{R}M$ be an le-module and the natural map $\psi: X^{M} \rightarrow X^{\overline{R}}$ be surjective. Then the following conditions are equivalent:
\begin{enumerate}
\item[(i)] $X^{M}$ is a spectral space;
\item[(ii)] $X^{M}$ is a $T_{0}$-space;
\item[(iii)] For every $p,q \in X^{M}$, $V^{*}(p) = V^{*}(q)$ implies that $p =q$;
\item[(iv)] $|Spec_P(M)| \leqslant 1$ for every $p \in Spec(R)$;
\item[(v)] $\psi$ is injective;
\item[(vi)] $X^{M}$ is homeomorphic to $X^{\overline{R}}$.
\end{enumerate}
\end{theorem}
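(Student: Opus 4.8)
The plan is to prove the chain of implications (i)$\Rightarrow$(ii)$\Rightarrow$(iii)$\Rightarrow$(iv)$\Rightarrow$(v)$\Rightarrow$(vi)$\Rightarrow$(i), reusing machinery already set up in the paper. The equivalence (iii)$\Leftrightarrow$(iv)$\Leftrightarrow$(v) is immediate from Proposition \ref{injective}, so the real content is linking these conditions to the topological statements (i), (ii), (vi). Note first that (i)$\Rightarrow$(ii) is trivial since a spectral space is by definition $T_{0}$. The step (ii)$\Rightarrow$(iii) is the one that genuinely uses the Zariski topology: given $p,q \in X^{M}$ with $V^{*}(p) = V^{*}(q)$, Proposition \ref{closure}(i) gives $\overline{\{p\}} = V^{*}(p) = V^{*}(q) = \overline{\{q\}}$, so $p$ and $q$ have the same closure; in a $T_{0}$-space two points with the same closure must coincide, hence $p = q$.

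Next I would close the loop via (v)$\Rightarrow$(vi)$\Rightarrow$(i). For (v)$\Rightarrow$(vi): if $\psi$ is injective, then since $\psi$ is assumed surjective it is bijective, and Corollary \ref{homeomorphic} immediately yields that $\psi$ is a homeomorphism, so $X^{M}$ is homeomorphic to $X^{\overline{R}}$. For (vi)$\Rightarrow$(i): the prime spectrum $X^{\overline{R}} = \mathrm{Spec}(\overline{R})$ of the commutative ring $\overline{R}$ is a spectral space by Hochster's theorem (or simply because $\mathrm{Spec}$ of any commutative ring is spectral), and being spectral is a topological property, hence preserved under homeomorphism; therefore $X^{M}$ is spectral. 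This completes the cycle and establishes the equivalence of all six conditions.

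I should double-check that the bridge from (iii)/(iv)/(v) back up to (i) does not secretly need more than a homeomorphism argument, and indeed it does not — once we know $X^{M} \cong X^{\overline{R}}$ the spectrality is inherited wholesale. It is worth remarking, though, that the paper has already done most of the work ``by hand'': Theorem \ref{open base} shows $X^{M}$ is quasi-compact with a basis of quasi-compact opens closed under finite intersection, and Theorem \ref{generic point} shows every irreducible closed subset has a generic point; so the \emph{only} spectral axiom that can fail is $T_{0}$, which is exactly why (i) and (ii) are equivalent. One could therefore alternatively argue (ii)$\Rightarrow$(i) directly by invoking Theorems \ref{open base} and \ref{generic point} together with the assumed $T_{0}$ property, bypassing the homeomorphism; I would mention this as the conceptual reason the theorem is true, while still running the implication cycle above for a clean formal proof.

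The main obstacle is not any single hard estimate but making sure the implication graph actually connects (ii) to the ring-theoretic conditions (iii)--(v): the linchpin is Proposition \ref{closure}(i), $\overline{\{p\}} = V^{*}(p)$, which converts the purely topological $T_{0}$ hypothesis into the statement ``distinct prime submodule elements have distinct $V^{*}$'', and from there Proposition \ref{injective} does the rest. Surjectivity of $\psi$ is used in two places — in (v)$\Rightarrow$(vi) to upgrade injectivity to bijectivity, and implicitly in Theorems \ref{open base} and \ref{generic point} — so the hypothesis is genuinely needed and I would flag that all the equivalences are being asserted under that standing assumption.
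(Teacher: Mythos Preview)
Your proof is correct and follows essentially the same approach as the paper: both rely on Proposition~\ref{injective} for (iii)$\Leftrightarrow$(iv)$\Leftrightarrow$(v), Corollary~\ref{homeomorphic} for the link to (vi), and Proposition~\ref{closure}(i) to convert $T_{0}$ into the $V^{*}$ condition. The only cosmetic difference is that the paper closes the loop via (ii)$\Rightarrow$(i) using Theorems~\ref{open base} and~\ref{generic point} directly, whereas you primarily close it via (vi)$\Rightarrow$(i) using spectrality of $\mathrm{Spec}(\overline{R})$ --- but you explicitly note the paper's route as the alternative, so the two arguments are interchangeable.
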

\begin{proof}
Equivalence of (iii), (iv) and (v) follows from Proposition \ref{injective} and equivalence of (v) and (vi) follows from Corollary \ref{homeomorphic}.
\\ $(i) \Rightarrow (iii)$: Let $p, q \in X^M$ be such that $V^*(p)=V^*(q)$. Then, by Proposition \ref{closure}, $\overline{\{p\}}=V^*(p)=V^*(q) = \overline{\{q\}}$. Since $X^M$ is spectral, it is $T_0$. Hence the closures of distinct points in $X^M$ are distinct and it follows that $p=q$.
\\ $(iii) \Rightarrow (ii)$: Let $p, q \in X^M$ be two distinct prime submodule elements. Then $V^*(p) \neq V^*(q)$ which implies that $\overline{\{p\}} \neq \overline{\{q\}}$, by Proposition \ref{closure}. Hence $X^M$ is $T_0$.
\\ $(ii) \Rightarrow (i)$: This follows from Theorem \ref{open base} and Theorem \ref{generic point}.
\end{proof}

An le-module $_{R}M$ is called a \emph{multiplication le-module} if every submodule element $n$ of $M$ can be expressed as $n = Ie$, for some ideal $I$ of $R$. Let $n$ be a submodule element of a multiplication le-module $_{R}M$. Then there exists an ideal $I$ of $R$ such that $n = Ie$. For each $k \in \mathbb{N}$ and $a_{i} \in (n:e)$, we have
\begin{center}
$a_{1}e+a_{2}e+\cdots+a_{k}e \leqslant n+n+\cdots+n (k times) = n$
\end{center}
Hence $(n:e)e \leqslant n$. Also $n = Ie$ implies $I \subseteq (n:e)$ which implies that $n = Ie \leqslant (n:e)e$. Thus $(n:e)e = n$.

\begin{theorem}
Let $_{R}M$ be a multiplication le-module and the natural map $\psi: X^{M} \rightarrow X^{\overline{R}}$ be surjective. Then $X^{M}$ is a spectral space.
\end{theorem}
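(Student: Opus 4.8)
The plan is to invoke the equivalence established in Theorem~\ref{spectral}: since $\psi$ is assumed surjective, it suffices to prove that $X^M$ is a $T_0$-space, or equivalently (by condition (iii) there) that for $p, q \in X^M$, $V^*(p) = V^*(q)$ forces $p = q$. So the whole burden reduces to showing that in a multiplication le-module a prime submodule element is determined by the ideal $(p:e)$ of $R$.

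First I would recall the computation already carried out in the paragraph preceding the statement: in a multiplication le-module every submodule element $n$ satisfies $n = (n:e)e$. In particular, a prime submodule element $p$ satisfies $p = (p:e)e$. Now suppose $p, q \in X^M$ with $V^*(p) = V^*(q)$; by the proposition relating $V^*$ and $(\,\cdot\,:e)$ (the one stated right after Proposition~\ref{V^{*}(n) = V^{*}((n:e)e)}), since $p$ and $q$ are prime, $V^*(p) = V^*(q)$ gives $(p:e) = (q:e)$. Then $p = (p:e)e = (q:e)e = q$. Hence condition (iii) of Theorem~\ref{spectral} holds, so $X^M$ is $T_0$, and therefore spectral.

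The main obstacle is essentially bookkeeping rather than a genuine difficulty: one must make sure the identity $n = (n:e)e$ is legitimately available for a prime submodule element (it is, since a prime submodule element is in particular a submodule element, and the displayed argument before the theorem used only that $n = Ie$ for some ideal $I$). One should also double-check that the hypothesis ``$\psi$ surjective'' is exactly what Theorem~\ref{spectral} requires — it is — so no extra work is needed to verify quasi-compactness, the quasi-compact open base, or the existence of generic points; those come for free from Theorem~\ref{open base} and Theorem~\ref{generic point} via the equivalence $(ii)\Rightarrow(i)$.

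So the proof is short: establish $(p:e) = (q:e) \Rightarrow p = q$ using $n = (n:e)e$, conclude $X^M$ is $T_0$, and cite Theorem~\ref{spectral} to upgrade $T_0$ to spectral. I would write it in three or four sentences, with the only displayed line being the chain $p = (p:e)e = (q:e)e = q$.
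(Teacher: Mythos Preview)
Your proposal is correct and follows essentially the same route as the paper: both arguments use the identity $n=(n:e)e$ for submodule elements of a multiplication le-module to deduce $p=q$ from $(p:e)=(q:e)$, conclude $X^M$ is $T_0$, and then upgrade to spectral (you via Theorem~\ref{spectral}, the paper by citing Theorem~\ref{open base} and Theorem~\ref{generic point} directly, which is exactly how the implication $(ii)\Rightarrow(i)$ in Theorem~\ref{spectral} is proved). One trivial slip: the proposition giving $(p:e)=(q:e)$ from $V^*(p)=V^*(q)$ for prime $p,q$ appears just \emph{before} Proposition~\ref{V^{*}(n) = V^{*}((n:e)e)}, not after.
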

\begin{proof}
First we show that $X^{M}$ is $T_{0}$. Let $l$ and $n$ be two distinct elements of $X^{M}$. If possible let $V^{*}(l) = V^{*}(n)$. Then $(l:e) = (n:e)$ which implies that $l = (l:e)e = (n:e)e = n$, a contradiction. Thus $\overline{\{l\}} = V^{*}(l) \neq V^{*}(n) = \overline{\{n\}}$, by Proposition \ref{closure}. Hence $X^{M}$ is $T_{0}$. Thus the theorem follows from Theorem \ref{open base} and Theorem \ref{generic point}.
\end{proof}

Thus assuming surjectivity of the natural map $\psi: X^{M} \longrightarrow X^{\overline{R}}$ gives us several equivalent characterizations for $X^M$ to be spectral. In other words, surjectivity of $\psi$ is a very strong condition to imply that $X^M$ is spectral. Now we prove some conditions for $X^M$ to be spectral, which do not assume surjectivity of the natural map $\psi: X^{M} \longrightarrow X^{\overline{R}}$.
\begin{theorem}
Let $_{R}M$ be an le-module and $\psi: X^{M} \longrightarrow X^{\overline{R}}$ be the natural map of $X^{M}$ such that the image Im$\; \psi$ of $\psi$ is a closed subset of $X^{\overline{R}}$. Then $X^{M}$ is a spectral space if and only if $\psi$ is injective.
\end{theorem}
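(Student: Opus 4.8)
The plan is to handle the two implications separately. For ``$X^M$ spectral $\Rightarrow$ $\psi$ injective'' I would simply observe that a spectral space is by definition $T_{0}$, so distinct points of $X^M$ have distinct closures; since $\overline{\{p\}} = V^{*}(p)$ by Proposition \ref{closure}(i), this forces $V^{*}(p) = V^{*}(q) \Rightarrow p = q$, which by Proposition \ref{injective} is exactly injectivity of $\psi$. This half uses neither surjectivity nor the closedness of the image, and is just the argument of Theorem \ref{spectral}.

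For the converse, assume $\psi$ is injective and set $Y = \mathrm{Im}\,\psi$, equipped with the subspace topology inherited from $X^{\overline R} = Spec(\overline R)$. Since $Spec(\overline R)$ is spectral and $Y$ is closed in it, $Y$ is itself a spectral space (a closed subspace of a spectral space is spectral, as recalled at the start of this section). Hence it suffices to show that $\psi$, regarded as a map $X^M \to Y$, is a homeomorphism, for then $X^M \cong Y$ is spectral. Now $\psi : X^M \to Y$ is continuous by Proposition \ref{continuous} and bijective (injective by hypothesis, surjective onto $Y$ by construction), so the only remaining point is that it is a closed map. Given a submodule element $n$ of $M$, from $Ann(M) = (0_M : e) \subseteq (n : e)$ together with Propositions \ref{continuous} and \ref{V^{*}(n) = V^{*}((n:e)e)}(ii) we get $\psi^{-1}(V^{\overline R}(\overline{(n:e)})) = V((n:e)e) = V^{*}(n)$; hence, by the set-theoretic identity $f(f^{-1}(A)) = A \cap f(X)$,
\[
\psi(V^{*}(n)) = V^{\overline R}(\overline{(n:e)}) \cap Y ,
\]
which is closed in $Y$. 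Since every closed subset of $X^M$ has the form $V^{*}(n)$, the map $\psi : X^M \to Y$ carries closed sets to closed sets; a continuous closed bijection is a homeomorphism, and the proof is complete.

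The one step requiring care is the closedness of $\psi : X^M \to Y$; everything else is formal. Because $\psi$ is not assumed surjective onto $X^{\overline R}$, one only obtains $\psi(\psi^{-1}(A)) = A \cap Y$ rather than $A$ itself, but this is harmless since $A \cap Y$ is closed in the subspace topology on $Y$ by definition. The hypothesis that $\mathrm{Im}\,\psi$ is closed in $X^{\overline R}$ enters exactly once — to guarantee that $Y$, as a closed subspace of the spectral space $Spec(\overline R)$, is again spectral — and it is precisely what lets the converse go through without assuming surjectivity of the natural map.
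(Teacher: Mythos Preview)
Your proof is correct and follows essentially the same route as the paper's: for the forward direction you invoke $T_0$ and Proposition~\ref{injective} (the paper cites Theorem~\ref{T_{0} space}, whose relevant implication is exactly this argument and does not actually need surjectivity), and for the converse you show $\psi : X^M \to Y$ is a homeomorphism onto the closed, hence spectral, image. The only cosmetic difference is that you compute $\psi(V^{*}(n)) = V^{\overline R}(\overline{(n{:}e)}) \cap Y$ directly via the identity $f(f^{-1}(A)) = A \cap \mathrm{Im}\,f$, whereas the paper first sets $Y' = Y \cap V^{\overline R}(\overline{(n{:}e)})$, checks $\psi^{-1}(Y') = V^{*}(n)$, and then uses surjectivity of $\psi$ onto $Y$ to recover $\psi(V^{*}(n)) = Y'$; the content is identical.
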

\begin{proof}
Let $Y = Im \; \psi$, Since $X^{\overline{R}}$ is a spectral space and $Y$ is a closed subset of $X^{\overline{R}}$, $Y$ is spectral for the induced topology. Suppose that $\psi$ is injective. Then $\psi : X^{M} \rightarrow Y$ is a bijection and by Proposition \ref{continuous}, $\psi : X^{M} \rightarrow Y$ is continuous. Now we show that $\psi$ is closed. Let $V^{*}(n)$ be a closed subset of $X^{M}$ for some submodule element $n$ of $M$. Then $Y' = Y \cap V^{\overline{R}}(\overline{(n:e)})$ is a closed subset of $Y$. Also $\psi^{-1}(Y') = \psi^{-1}(Y \cap V^{\overline{R}}(\overline{(n:e)})) = \psi^{-1}(Y) \cap \psi^{-1}(V^{\overline{R}}(\overline{(n:e)})) = X^{M} \cap \psi^{-1}(V^{\overline{R}}(\overline{(n:e)})) = \psi^{-1}(V^{\overline{R}}(\overline{(n:e)})) = V((n:e)e) = V^{*}(n)$ by Proposition \ref{continuous}. Since $\psi : X^{M} \rightarrow Y$ is surjective, $\psi(V^{*}(n)) = \psi(\psi^{-1}(Y')) = Y'$, a closed subset of $Y$. Thus $\psi : X^{M} \rightarrow Y$ is a homeomorphism and hence $X^{M}$ is a spectral space. Conversely, let $X^{M}$ be a spectral space. Then $X^{M}$ is a $T_{0}$-space and so $\psi$ is injective by Theorem \ref{T_{0} space}.
\end{proof}

\begin{theorem}
Let $_{R}M$ be an le-module such that $Spec(M) = X^{M}$ is a non-empty finite set. Then $X^{M}$ is a spectral space if and only if $|Spec_{P}(M)| \leqslant 1$ for every $P \in Spec(R)$.
\end{theorem}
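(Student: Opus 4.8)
The plan is to reduce the theorem to the single assertion that, for a finite $X^{M}$, being spectral is the same as being $T_{0}$, and then to note that Propositions \ref{closure} and \ref{injective} already identify the $T_{0}$ property with the condition $|Spec_{P}(M)| \leqslant 1$. Indeed, by Proposition \ref{closure} we have $\overline{\{p\}} = V^{*}(p)$ for every $p \in X^{M}$, so $X^{M}$ is $T_{0}$ exactly when distinct prime submodule elements have distinct $V^{*}$'s, which is condition (ii) of Proposition \ref{injective}, equivalent there to $|Spec_{P}(M)| \leqslant 1$ for every $P \in Spec(R)$. Hence it suffices to prove that a finite $X^{M}$ is spectral if and only if it is $T_{0}$.

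\textbf{The forward implication.} If $X^{M}$ is spectral it is by definition $T_{0}$, so by the equivalence recalled above $|Spec_{P}(M)| \leqslant 1$ for every $P \in Spec(R)$; nothing more is needed here.

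\textbf{The converse implication.} Assume $X^{M}$ is $T_{0}$ (equivalently, $|Spec_{P}(M)| \leqslant 1$ for all $P$). I would verify the remaining three requirements in the definition of a spectral space using only finiteness of $X^{M}$. First, a finite space is quasi-compact, and in fact every subset of it is quasi-compact; moreover its topology is a finite collection of sets, closed under finite intersection, and is trivially a base for itself, so the quasi-compact open subsets of $X^{M}$ (which are all of its open subsets) are closed under finite intersection and form an open base. Second, for the generic-point condition --- the place where Theorem \ref{generic point} needed $\psi$ surjective and where finiteness now substitutes --- let $Y = \{p_{1},\dots,p_{n}\}$ be a non-empty irreducible closed subset. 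Since $Y$ is closed and $p_{i} \in Y$, Proposition \ref{closure} gives $V^{*}(p_{i}) = \overline{\{p_{i}\}} \subseteq Y$, and since closure commutes with finite unions, $Y = \overline{Y} = \bigcup_{i=1}^{n}\overline{\{p_{i}\}} = \bigcup_{i=1}^{n} V^{*}(p_{i})$. Applying the definition of irreducibility inductively to this finite union of closed subsets yields $Y = V^{*}(p_{i}) = \overline{\{p_{i}\}}$ for some $i$, so $p_{i}$ is a generic point of $Y$. Together with $T_{0}$, this shows $X^{M}$ is spectral.

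\textbf{Where the difficulty lies.} There is no genuinely hard step: the content is the observation that finiteness simultaneously supplies quasi-compactness, the quasi-compact-open-base condition, and the generic points, the last of which otherwise (Theorem \ref{T_{0} space}, Theorem \ref{generic point}) relied on surjectivity of the natural map $\psi$. The only point demanding a little care is to derive the $T_{0}$ axiom via Propositions \ref{closure} and \ref{injective}, rather than citing Theorem \ref{T_{0} space}, whose proof invokes results that assume $\psi$ surjective.
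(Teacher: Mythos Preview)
Your proof is correct and follows essentially the same approach as the paper: both reduce, via finiteness, to showing that $X^{M}$ is spectral if and only if it is $T_{0}$, by verifying quasi-compactness, the open-base condition, and the existence of generic points directly from finiteness, and then linking $T_{0}$ to the condition $|Spec_{P}(M)|\leqslant 1$. Two small differences are worth noting: the paper decomposes an irreducible closed $Y=\{p_{1},\dots,p_{n}\}$ as $V^{*}(\Im(Y))=V^{*}(p_{1}\wedge\cdots\wedge p_{n})=V^{*}(p_{1})\cup\cdots\cup V^{*}(p_{n})$ using Proposition~\ref{Zariski topology}(iv)(a), whereas you obtain the same union more elementarily from $\overline{\{p_{1},\dots,p_{n}\}}=\bigcup_{i}\overline{\{p_{i}\}}$; and the paper concludes by citing Theorem~\ref{T_{0} space} for the equivalence $T_{0}\Leftrightarrow |Spec_{P}(M)|\leqslant 1$, while you --- more carefully --- extract this directly from Propositions~\ref{closure} and~\ref{injective}, avoiding the surjectivity hypothesis on $\psi$ present in Theorem~\ref{T_{0} space}.
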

\begin{proof}
The finiteness of $|X^{M}|$ implies that $X^{M}$ is quasi-compact and the quasi-compact open subsets of $X^{M}$ are closed under finite intersection and forms an open base. We show that every irreducible closed subset of $X^{M}$ has a generic point. Let $Y = \{p_{1}, p_{2}, \cdots, p_{n} \}$ be an irreducible closed subset of $X^{M}$. Then $Y = \overline{Y} = V^{*}(\Im(Y)) = V^{*}(p_{1} \wedge p_{2} \wedge \cdots \wedge p_{n}) = V^{*}(p_{1}) \cup V^{*}(p_{2}) \cup \cdots \cup V^{*}(p_{n}) = \overline{\{p_{1}\}} \cup \overline{\{p_{2}\}} \cup \cdots \cup \overline{\{p_{n}\}}$. Since $Y$ is irreducible, $Y = \overline{\{p_{i}\}}$ for some $i$. Therefore by Hochster`s characterization of spectral spaces, $X^{M}$ is a spectral space if and only if $X^{M}$ is a $T_{0}$-space. Hence by Theorem \ref{T_{0} space}, it follows that $X^{M}$ is a spectral space if and only if $|Spec_{P}(M)| \leqslant 1$ for every $P \in Spec(R)$.
\end{proof}

\bibliographystyle{amsplain}

\begin{thebibliography}{10}
\baselineskip 5mm


\bibitem{Abuhlail}
J. Abuhlail, A dual Zariski topology for modules, Topology Appl. 158 (2011) 457-467.

\bibitem{Anderson 2}
D. D. Anderson, Abstract commutative ideal theory without chain condition, Algebra Universalis 6 (1976) 131-145.

\bibitem{Anderson and Johnson}
D. D. Anderson and E. W. Johnson, Abstract ideal theory from Krull to the present, in: Ideal theoretic methods in commutative algebra(Columbia, MO, 1999), Lecture notes in Pure and Appl. Math., Marcel Dekkar, New York, 220 (2001) 27-47.

\bibitem{Atiyah}
M. F. Atiyah and I. G. Macdonald, Introduction to Commutative Algebra, Addison-Wesley Publishing Co., 1969.

\bibitem{Behboodi}
M. Behboodi, A generalization of the classical krull dimension for modules, J. Algebra 305 (2006) 1128-1148.

\bibitem{Behboodi-Haddadi}
M. Behboodi and M. R. Haddadi, Classical Zariski topology of modules and spectral spaces I, International Electronic Journal of Algebra 4 (2008) 104-130.

\bibitem{Bhuniya}
A. K. Bhuniya and M. Kumbhakar, Uniqueness of primary decompositions in Laskerian lattice modules, Communicated.

\bibitem{Bourbaki}
N. Bourbaki, Commutative Algebra, Springer-Verlag, 1998.

\bibitem{Bourbaki top}
N. Bourbaki, General Topology, Part I, Addison-Wesley, 1966.

\bibitem{Dilworth 1}
R. P. Dilworth, Non-commutative residuated lattices, Trans. Amer. Math. Soc. 46 (1939) 426-444.

\bibitem{Dilworth 2}
R. P. Dilworth, Abstract commutative ideal theory, Pacific J. Math. 12 (1962) 481-498.

\bibitem{Duraivel}
T. Duraivel, Topology on spectrum of modules, J. Ramanujan Math. Soc. 9 (1) (1994) 25-34.

\bibitem{Hassanzadeh3}
D. Hassanzadeh-Lelekaami and H. Roshan-Shekalgourabi, Prime submodules and a sheaf on the prime spectra of modules, Comm. Algebra 42 (7) (2014) 3063-3077.

\bibitem{Hochster}
M. Hochster, Prime ideal structure in commutative rings, Trans. Amer. Math. Soc. 137 (1969) 43-60.

\bibitem{Johnson1}
J. A. Johnson, a-adic completions of Noetherian lattice modules, Fund. Math. 66 (1970) 347-373.

\bibitem{Johnson2}
J. A. Johnson, Quotients in Noetherian lattice modules, Proceedings of the American Mathematical Society 28 (1) (1971) 71-74.

\bibitem{Johnson3}
J. A. Johnson, Noetherian lattice modules and semi-local comletions, Fundamenta Mathematicae 73 (1971) 93-103.

\bibitem{Johnson and Johnson 1}
E. W. Johnson and J. A. Johnson, Lattice modules over semi-local Noether lattices, Fund. Math. 68 (1970) 187-201.

\bibitem{Johnson and Johnson 2}
E. W. Johnson and J. A. Johnson, Lattice Modules over principal element domains, Comm. in Algebra 31 (7) (2003) 3505-3518.

\bibitem{Krull}
W. Krull, Axiomatische begr$\ddot{u}$ndung der allgemeinen idealtheorie, Sitz-ber. phys.-med. Soz. Erlangen 56 (1924) 47-63.

\bibitem{McCarthy}
M. D. Larsen and P. J. McCarthy, Multiplicative theory of ideals, Academic Press, Volume-43, 1971.

\bibitem{CPLu1}
C. P. Lu, The Zariski topology on the prime spectrum of a module, Houston J. Math. 25 (3) (1999) 417-432.

\bibitem{CPLu2}
C. P. Lu, A module whose prime spectrum has the surjective natural map, Houston J. Math. 33 (1) (2007) 125-143.

\bibitem{CPLu3}
C. P. Lu, Modules with Noetherian spectrum, Comm. Algebra 38 (3) (2010) 807-828.

\bibitem{Lu and Yu}
D. Lu and W. Yu, On prime spectrum of commutative rings, Comm. Algebra 34 (2006) 2667-2672.


\bibitem{McCasland}
R. McCasland, M. Moore, and P. Smith, On the spectrum of a module over a commutative ring, Comm. Algebra 25 (1) (1997) 79–103.

\bibitem{Nakkar and Anderson 1}
H. M. Nakkar and D. D. Anderson, Associated and weakly associated prime elements and primary decomposition in lattice modules, Algebra Universalis 25 (1988) 296-209.

\bibitem{Nakkar and Anderson 2}
H. M. Nakkar and D. D. Anderson, Localization of associated and weakly associated prime elements and supports of lattice modules of finite length, Studia Scientiarum Mathematicarum Hungarica 25 (1990) 263-273.

\bibitem{Schwartz}
N. Schwartz and M. Tressl, Elementary properties of minimal and maximal points in Zariski spectra, J. Algebra 323 (2010) 2667-2672.

\bibitem{Ward 1}
M. Ward, Residuation in structures over which a multiplication is defined, Duke Math. Jour. 3 (1937) 627-636.

\bibitem{Ward 2}
M. Ward, Structure residuation, Ann. of Math. 39 (1938) 558-568.

\bibitem{Ward and Dilworth 1}
M. Ward and R.P. Dilworth, Residuated lattices, Trans. Amer. Math. Soc. 35 (1939) 335-354.

\bibitem{Ward and Dilworth 2}
M. Ward and R.P. Dilworth, The lattice theory of Ova, Ann. of Math. 40 (3) (1939) 600-608.

\bibitem{Ward and Dilworth 3}
M. Ward and R.P. Dilworth, Evaluations over residuated structures, Ann. of Math. 40 (1939) 328-338.

\bibitem{Whitman 1}
D. G. Whitman, On ring theoretic lattice submodules, Fund. Math. 70 (1971) 221-229.

\bibitem{Zhang}
G. Zhang, W. Tong, and F. Wang, Spectrum of a noncommutative ring, Comm. Algebra 34 (8) (2006) 2795-2810.



\end{thebibliography}

\end{document}